\newtheorem{theorem}{Theorem}[section]
\newtheorem{lemma}[theorem]{Lemma}
\newtheorem{proposition}[theorem]{Proposition}
\newtheorem{corollary}[theorem]{Corollary}
\theoremstyle{definition}
\theoremstyle{remark}
\numberwithin{equation}{section}
\newcommand{\NN}{\mathbb{N}}
\newcommand{\CC}{\mathbb {C}}
\begin{document}
\setcounter{page}{1}
\title[Dynamics of integral and   differentiation  operators ]{Dynamics of the Volterra-type integral and   differentiation  operators on  generalized Fock spaces}

\author[Jos\'e Bonet*]{Jos\'e Bonet* }\footnote{*Corresponding author}
\address{Instituto Universitario de Matem\'atica Pura y Aplicada IUMPA \\
Universitat Polit\`ecnica de Val\`encia\\
E-46071 Valencia, Spain}
\email{jbonet@mat.upv.es}
\thanks{The research of the first author  was partially  supported by the research projects MTM2016-76647-P and  GV Prometeo 2017/102 (Spain).}

\author[Tesfa  Mengestie]{Tesfa  Mengestie }
\address{Department of Mathematical Sciences \\
Western Norway University of Applied Sciences\\
Klingenbergvegen 8, N-5414 Stord, Norway}
\email{Tesfa.Mengestie@hvl.no}

\author [Mafuz Worku]{ Mafuz Worku}
\address{Department of Mathematics,
Addis Ababa University, Ethiopia}
\email{mafuzhumer@gmail.com}
\thanks{The research of the third author  is  supported by ISP project, Addis Ababa University, Ethiopia}

\subjclass[2010]{Primary: 47B38, 30H20; Secondary: 46E15, 47A16, 47A35}
 \keywords{ Generalized Fock spaces, power bounded, uniformly mean ergodic, Volterra-type integral operator, differential operator, Hardy operator,  supercyclic,  hypercyclic,  cyclic, Ritt's resolvent condition}

\begin{abstract}
Various dynamical properties of the differentiation and Volterra-type integral operators on generalized Fock spaces are studied. We show that  the differentiation operator is always supercyclic on these spaces. We further characterize when it is  hypercyclic, power bounded and uniformly mean ergodic. We prove that the operator  satisfies  the  Ritt's resolvent condition  if and only if it is power bounded and uniformly mean ergodic.  Some similar results  are obtained for the  Volterra-type   and Hardy integral  operators.
\end{abstract}

\maketitle

\section{Introduction}

For holomorphic functions  $f$ and $g$, the  differentiation operator $Df= f'$ and the Volterra-type integral operator  $V_g f(z)= \int_{0}^z g'(w) f(w)dw$ are classical objects in operator theory, function spaces and differential equations. Many of their  basic  properties   including boundedness, compactness and spectra  have been extensively  studied when acting on several function spaces over various domains; see for example   \cite{AT, Maria, Maria2, Bonet, JBB, BT, Olivia, Diff, TM5, TM6, TM3} and the references  therein. Understanding the dynamical structures of  these operators is another  important  and basic problem in  operator theory.  The main purpose of this paper is to study  such structures  on    generalized  Fock   spaces  $\mathcal{F}^p_{(\alpha, m)}$. We are especially interested in identifying  their  various forms of cyclicity, power boundedness, and uniform mean ergodic properties.

   Let us first set the generalized Fock spaces.   For  $m, \alpha >0$ and $0<p< \infty$,  the  spaces  $\mathcal{F}_{(\alpha, m)}^p$    consist of  all entire functions $f$ for which
\begin{align*}
\|f\|_{(p,\alpha,m)}^p= \int_{\CC} |f(z)|^pe^{-p\alpha |z|^m} dA(z) <\infty
\end{align*}
where   $dA$ denotes the usual Lebesgue area  measure on $\CC$.
Note that  the norm  can be rewritten in terms of the  integral mean $M_p^p(f,r)=\int_0^{2\pi }|f(re^{it})|^p\frac{dt}{2\pi }$ as
\begin{align*}
\|f\|_{(p, \alpha, m)}^p= 2\pi \int_0^{\infty }M_p^p(f,r)re^{-p\alpha r^m}dr.
\end{align*}
We now introduce a few relevant notations needed in the rest of the article. The expression
 $U(z)\lesssim V(z)$ (or
equivalently $V(z)\gtrsim U(z)$) means that there is a constant
$C$ such that $U(z)\leq CV(z)$ holds for all $z$ in the set in
question. We write $U(z)\simeq V(z)$ if both $U(z)\lesssim V(z)$
and $V(z)\lesssim U(z)$.

The norms of the  monomials will  play an important role in studying the dynamical properties of both the differentiation and integral operators on the spaces $\mathcal{F}_{(\alpha, m)}^p$.   Thus, we  estimate them  using   Stirling's formulas,
\begin{align}
\label{factorial}
n! \simeq \sqrt{ n}n^{n}e^{-n} \ \ \text{and} \ \ \Gamma (x+1)\simeq \sqrt{ x}x^x e^{-x}, \ x>0,
\end{align}  where $\Gamma$ denotes the Gamma function,  and we get
\begin{align} \label{estmono}
\|z^n\|_{(p, \alpha, m)}=\bigg(2\pi \int_0^{\infty }r^{pn}e^{-p\alpha r^m}rdr\bigg)^{1/p}=
\frac{\bigg(2\pi\Gamma \Big(\frac{pn}{m}+\frac{2}{m}\Big)\bigg)^{1/p}}{m(p\alpha )^{\frac{n}{m}+\frac{2}{mp}} }\quad \quad \nonumber \\
\simeq \frac{(pn+2-m)^{\frac{n}{m}+\frac{2}{mp}-\frac{1}{2p}}}{(p\alpha )^{\frac{n}{m}+\frac{2}{mp}}m^{\frac{n}{m}+\frac{2}{mp}+\frac{1}{2p}} e^{\frac{n}{m}+\frac{2}{mp}-\frac{1}{p}}}
\simeq \bigg(\frac{n}{me\alpha }\bigg)^{\frac{n}{m}+\frac{2}{mp}-\frac{1}{2p}}. \quad \quad \quad \quad
\end{align}
From the preceding  estimate we have in particular,  for all $n\in \NN$,
 \begin{align}
 \label{lm1}
\|z^n\|_{(p, \alpha,1)}\simeq n! \alpha^{-n}n^{\frac{3-p}{2p}}.
 \end{align}
Next,  we recall some definitions related to iterates of an operator.  Given a Banach space $X$, we denote  by $\mathcal{L}(X)$ the space of continuous linear operators $T$ on $X$.
An operator $T\in \mathcal{L}(X)$ is said to be  hypercyclic if there exists a vector  $x$ in $ X$  such that its orbit, $\{ T^nx; n\in \NN_0=\{0 \}\cup \NN\}, $ is dense in $X$. The operator is called cyclic if the linear span of  an orbit is dense in $X$, and supercyclic whenever a  projective orbit, $\{ \lambda T^nx; n\in \NN_0 , \lambda \in \CC\}$,  is dense in $X$. Obviously,  hypercyclicity is a stronger property than supercyclicity  which in turn  is stronger than  cyclicity. Good references on this subject are \cite{BM,Grosse}.

An operator $T\in \mathcal{L}(X)$ is said to be power bounded if there exists a positive number $M$ such that  $\|T^n\|\leq M $ for all $n\in\NN_0$. The operator $T$ is called quasi-nilpotent if $\lim_{n \rightarrow \infty} ||T^n||^{1/n} = 0$.  It  is said to be mean ergodic if there exists an operator $P\in \mathcal{L}(X)$ such that $$ Px:=\lim_{n\to \infty }\frac{1}{n}\sum_{k=1}^nT^kx, \ x\in X $$ exists in $X$. If the convergence is in the operator norm, then $T$ is called uniformly mean ergodic. The standard references about mean ergodic operators are the books of Krengel \cite{Kr} and Yosida \cite{Y}.

\section{ Dynamics of the differentiation  operator on  $\mathcal{F}^p_{(\alpha, m)}$}

The  differentiation operator $D$ has been studied on Banach spaces of analytic functions by several authors.  Harutyunyan and Lusky \cite{Harutyunyan} identified conditions under which the operator becomes bounded when acting between weighted spaces of holomorphic functions endowed with the supremum norm; see also \cite{AT}.  Bonet \cite{Bonet} studied various dynamical properties of the operator on these weighted  spaces, and the  study was  continued jointly with Beltr\'an, Bonilla and Fern\'andez in \cite{Maria2,JBB}. Later in 2014,  Beltr\'{a}n \cite{Maria} studied the dynamics of the operator on  a wider class of generalized weighted Bergman spaces.    On the other hand, the operator is also  known to act in  unbounded way in several functional spaces. For instance in \cite{UK} Ueki showed its unboundedness on the classical growth type Fock spaces.  Mengestie and Ueki \cite{TM3} verified its  unboundedness on all classical  Fock spaces  and  generalized Fock  spaces where the weight function grows faster than the Gaussian weight function $|z|^2/2$.  The same conclusion was later   drawn in \cite{TM5} on the Fock--Sobolev spaces which are typical  examples of  generalized  Fock spaces with  weight function growing slower than the Gaussian function. Inspired by all these, Mengestie \cite{Diff}  asked  the question of  how fast should the weight function need to grow in order that the corresponding  generalized Fock spaces support a continuous differentiation operator. He  further  considered the spaces $\mathcal{F}_{(1, m)}^p$ and  showed that the weight function should actually  grow much slower than the classical Gaussian function.  More specifically,  it was proved that  the  operator $D$ is bounded on $\mathcal{F}_{(1, m)}^p$, $0<p<\infty $, if and only if  $m\leq 1 $, and compact if and only if $m<1$.  See also \cite[Section 5]{Maria}.   In this section, we  continue those lines of research and investigate the dynamical behaviour of the operator $D$ on  $\mathcal{F}_{(\alpha, m)}^p$.

  \begin{proposition}\label{prop1}
Let $1\leq p<\infty $ and let the differentiation operator $D$  be bounded on $\mathcal{F}^p_{(\alpha, m)}$.   Then
\begin{enumerate}
\item $D$  is hypercyclic on $\mathcal{F}^p_{(\alpha, m)}$ if and only if  either
 $m=1$ and $\alpha >1$ or $m=1$, $\alpha =1$ and $p>3$.
\item $D$  is supercyclic and hence cyclic on $\mathcal{F}_{(\alpha, m)}^p$.
\end{enumerate}
\end{proposition}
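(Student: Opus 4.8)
The plan is to exploit the fact that, with respect to the monomial basis, $D$ is a weighted backward shift, and to reduce every part of the statement to the asymptotics of a single telescoping product. Writing $e_n=z^n/\|z^n\|_{(p,\alpha,m)}$, we have $De_n=w_ne_{n-1}$ with weights $w_n=n\,\|z^{n-1}\|_{(p,\alpha,m)}/\|z^n\|_{(p,\alpha,m)}$, and the partial products telescope:
\begin{align*}
\prod_{k=1}^n w_k=n!\,\frac{\|1\|_{(p,\alpha,m)}}{\|z^n\|_{(p,\alpha,m)}}.
\end{align*}
By \eqref{estmono} the norm $\|z^n\|$ grows strictly faster than $n!$ as soon as $m<1$, whereas for $m=1$ the sharp estimate \eqref{lm1} yields $\prod_{k=1}^n w_k\simeq\alpha^n n^{(p-3)/(2p)}$. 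Thus the polynomial exponent $(p-3)/(2p)$ is precisely what will separate $p>3$ from $p\le 3$ in the critical case $\alpha=1$. First I would record that the polynomials $\mathcal P$ are dense in $\mathcal F^p_{(\alpha,m)}$ and that $\{e_n\}$ is an unconditional basis equivalent to the canonical $\ell^p$-basis, so that the weighted-shift picture and the standard dynamical theory of such shifts are legitimate.

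For part (ii), supercyclicity comes essentially for free. I would invoke the Supercyclicity Criterion with the dense set $\mathcal P$ in both slots and with the right inverses $S^k$ given by $k$-fold integration, $Sz^j=z^{j+1}/(j+1)$ (a Volterra/Hardy-type primitive, matching the theme of the paper), and with $n_k=k$. Since $D^k$ annihilates every polynomial once $k$ exceeds its degree, the factor $\|D^k x\|$ is eventually zero for $x\in\mathcal P$, while $D^kS^k=\mathrm{id}$ on $\mathcal P$; both hypotheses hold trivially and no growth condition on the weights is needed. Hence $D$ is supercyclic, and a fortiori cyclic, for every $m\le 1$, that is, whenever it is bounded.

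For the sufficiency in part (i) I would run the Hypercyclicity Criterion with the same data $\mathcal P$, $n_k=k$, $S_k=S^k$. The conditions $D^kx\to 0$ (eventually zero on $\mathcal P$) and $D^kS^ky=y$ are immediate, so everything hinges on the single estimate $\|S^k z^j\|\to 0$. Using $S^kz^j=\frac{j!}{(j+k)!}z^{j+k}$ together with \eqref{lm1},
\begin{align*}
\|S^kz^j\|_{(p,\alpha,1)}=\frac{j!}{(j+k)!}\,\|z^{j+k}\|_{(p,\alpha,1)}\simeq j!\,\alpha^{-(j+k)}(j+k)^{\frac{3-p}{2p}},
\end{align*}
which tends to $0$ as $k\to\infty$ exactly when $\alpha>1$, or $\alpha=1$ and $\frac{3-p}{2p}<0$, i.e. $p>3$ (and boundedness forces $m=1$). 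In these cases $D$ satisfies the Hypercyclicity Criterion and is therefore hypercyclic.

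The necessity in part (i) is the main obstacle, and the only delicate point. In the complementary cases — $m<1$, or $m=1$ with $\alpha<1$, or $m=1,\alpha=1,p\le 3$ — I would show $D$ is power bounded, which excludes hypercyclicity since orbits of a power-bounded operator stay bounded. Equivalently, and more cleanly, I would quote the characterization that a weighted backward shift is hypercyclic if and only if $\sup_n\prod_{k=1}^n w_k=\infty$, and read the conclusion directly off $\prod_{k=1}^n w_k\simeq\alpha^n n^{(p-3)/(2p)}$: this supremum is finite precisely in the three excluded cases (tending to $0$ when $\alpha<1$ or $m<1$, where in fact $D$ is compact and quasi-nilpotent, and staying bounded by a constant when $\alpha=1,\,p\le 3$). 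The hard part will be the critical circle $\alpha=1$, where $w_n\to 1$ and the dichotomy between $p>3$ and $p\le 3$ is decided entirely by the sign of the exponent $(p-3)/(2p)$; here the sharp form of \eqref{lm1}, with comparison constants uniform in $n$, is indispensable, as is the structural input that the monomials furnish a basis equivalent to the $\ell^p$-basis so that the shift characterization genuinely applies.
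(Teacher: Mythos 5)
Parts (ii) and the sufficiency half of part (i) of your proposal are sound: verifying the Supercyclicity and Hypercyclicity Criteria directly with the polynomials, the primitives $S^kz^j=\frac{j!}{(j+k)!}z^{j+k}$ and the full sequence $n_k=k$ works (the products $\|D^kx\|\,\|S^ky\|$ vanish eventually, $D^kS^k=\mathrm{id}$ on polynomials, and your computation of $\|S^kz^j\|$ via \eqref{lm1} is correct). This is essentially the content the paper imports by citing \cite[Corollary~3.3]{Alfred} and \cite[Theorem~5.2]{Maria}.

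The necessity half of part (i), however, rests on a false structural claim, and this is a genuine gap. The normalized monomials $e_n=z^n/\|z^n\|_{(p,\alpha,1)}$ do \emph{not} form an unconditional basis of $\mathcal{F}^p_{(\alpha,1)}$ equivalent to the canonical $\ell^p$-basis unless $p=2$. A concrete test: for $f_t(z)=e^{tz}$ with $0<t<\alpha$ one gets $\|f_t\|_{(p,\alpha,1)}\simeq(\alpha-t)^{-3/(2p)}$ as $t\to\alpha^-$, whereas \eqref{lm1} gives $\bigl(\sum_n (t^n/n!)^p\|z^n\|^p_{(p,\alpha,1)}\bigr)^{1/p}\simeq(\alpha-t)^{-(5-p)/(2p)}$ for $1\le p<5$; the exponents agree only at $p=2$ (for $p\ge 5$ the discrepancy is even more pronounced). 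Intuitively, the functions $r^ne^{-\alpha r}$ concentrate on strongly overlapping annuli, so the monomials are nothing like disjointly supported vectors in $L^p$. Consequently the Salas/Grosse--Erdmann characterization of hypercyclic weighted backward shifts on $\ell^p$ cannot be invoked; even the sequence-space version would require the monomials to be a Schauder basis of $\mathcal{F}^p_{(\alpha,1)}$, a delicate Lusky-type property that cannot simply be ``recorded.'' Your fallback---power boundedness of $D$ in the excluded cases---is unproven exactly where it matters: in the critical case $m=1$, $\alpha=1$, $p\le3$ the available estimate \eqref{estdiff} yields only $\|D^k\|\lesssim k^{p/2}$, bounded weight products give nothing without the basis equivalence, and neither your proposal nor the paper (whose Theorem~\ref{thm1} concerns the \emph{conjunction} of power boundedness and uniform mean ergodicity) establishes power boundedness at $\alpha=1$ for $p\neq 2$. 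A correct replacement that stays elementary: if $T$ is hypercyclic then $\sup_n\|T^{*n}x^*\|=\infty$ for every nonzero functional $x^*$ (otherwise the scalar orbit $\langle T^nx,x^*\rangle$ of a hypercyclic vector $x$ would be bounded yet dense in $\CC$); taking $x^*=\delta_0$ and noting that $\|D^{*n}\delta_0\|=\sup_{\|f\|_{(p,\alpha,m)}\le1}|f^{(n)}(0)|=n!/\|z^n\|_{(p,\alpha,m)}$ (Cauchy's inequality gives $|a_n|\,\|z^n\|\le\|f\|$, with equality for $f=z^n/\|z^n\|$), hypercyclicity forces $\liminf_n\|z^n\|_{(p,\alpha,m)}/n!=0$, which by \eqref{estmono} and \eqref{lm1} fails precisely when $m<1$, or $m=1$ and $\alpha<1$, or $m=1$, $\alpha=1$ and $p\le3$. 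This argument (or simply citing \cite[Theorem~5.2]{Maria}, as the paper does) closes the gap.
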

\begin{proof} (i) First note that since $D$ is bounded, $m\leq 1$, as can be seen from (\ref{estmono}). In addition,  since no compact operator is hypercyclic on a non zero complex Banach space \cite[Corollary 1.22]{BM}, it follows that $D$ is not hypercyclic on $\mathcal{F}_{(\alpha, m)}^p$  whenever  $m<1$. On the other hand, for $m=1$, using the relation in   \eqref{lm1}, we have
\begin{align*}
\liminf_{n\to \infty }\frac{\|z^n\|_{(p, \alpha, 1)}}{n!}\simeq \liminf_{n\to \infty } \frac{n^{\frac{3}{2p}}}{\alpha^n\sqrt{n}}
=\begin{cases} 0,\ \text{for} \  \alpha =1 \ \text{and} \ p>3 \ \text{or} \ \alpha >1
,\\
1, \ \ \text{for} \ \alpha =1 \ \text{and} \ p=3, \\
\infty ,\ \text{for} \  \alpha <1 \ \text{or} \ \alpha =1 \ \text{and} \ p< 3.
\end{cases}
\end{align*}
Then,  by Theorem 5.2 of \cite{Maria}, $D$ is hypercyclic if and only if either $ \alpha =1 \ \text{and} \ p>3 \ \text{or} \ \alpha >1$. \\
(ii) For this part, we follow the arguments used in the  proof of  \cite[Proposition~2.7]{Bonet}. Since for each $n\in \NN$ the monomial  $z^n$ belongs to the kernel $\text{Ker}  D^{n+1}$ of $D^{n+1}$,  the generalized kernel set  \begin{align*}
G_{\text{Ker}}:=\bigcup_{n=0}^{\infty }\text{Ker} D^n
\end{align*} contains  all the polynomials.  Since the polynomials are dense in $\mathcal{F}_{(\alpha, m)}^p$, it follows that  $G_{\text{Ker}}$ is dense in $\mathcal{F}_{(\alpha, m)}^p$. Moreover, the range of the operator $D$ contains polynomials and therefore it is dense in  $\mathcal{F}^p_{(\alpha, m)}$. Then our conclusion follows after an application of \cite[Corollary~3.3]{Alfred}.
\end{proof}

We note that  the proof of  part (i) depends on the hypercyclicity criterion  due to  B\'es and Peris \cite{BP}, where the original idea goes  back to the work of  Kitai in her Ph.D. thesis  \cite[Theorem 3.4]{Grosse}.  The aforementioned  Theorem 5.2 of  \cite{Maria} ensures that $D$ is hypercyclic  on generalized Bergman spaces  if and only if it satisfies  the hypercyclicity criterion. This was further shown to be equivalent to  a condition like
\begin{align*}
\liminf_{n\to \infty }(n!)^{-1}\|z^n\|_{(p, \alpha, 1)}=0
\end{align*} which remains valid in our setting. Similarly, the proof of part (ii) was based on a density condition in  \cite{Alfred}. This condition is equivalent to the known  supercyclicity criterion; see  \cite[Lemma~3.1]{Alfred}.  Therefore, $D$ satisfies the supercyclicity criterion   if and only it is supercyclic.  We note that not all supercyclic operators satisfy this criterion; see \cite{ DLR} for an  example.

Having completely identified conditions under which $D$ is hypercyclic, we  next consider the question of when $D$ can be topologically  mixing  on  $\mathcal{F}^p_{(\alpha, m)}$.
Recall that  an operator $T$ on a Banach space $X$ is topologically mixing  if for every pair of non-empty open subsets $U$ and $V$ of $X$, there exists an $N\in \NN$ such that $ T^n(U)\cap V \neq \emptyset$ for all $n\geq N$. Note that topologically mixing is a stronger  operator theoretic condition than hypercyclicity in general.  Following  the discussions above and the arguments used back in the proof of  Theorem 2.4 and Corollary~2.6  of \cite{Bonet},  the differentiation operator $ D$ is topologically mixing  on  $\mathcal{F}^p_{(\alpha, m)}$ whenever  it is hypercyclic.

Now we investigate when the differentiation operator is power bounded and uniformly mean ergodic.

 \begin{theorem}\label{thm1}
Let $1\leq p<\infty $ and  the differentiation operator $D$  be bounded on  $\mathcal{F}^p_{(\alpha, m)}$. Then the  following statements are equivalent.
\begin{enumerate}
 \item $D$  is power bounded and uniformly mean ergodic on  $\mathcal{F}^p_{(\alpha, m)}$.
  \item  Either $m<1$ or $m=1$ and $\alpha <1$.
  \end{enumerate}
\end{theorem}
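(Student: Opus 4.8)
The plan is to reduce everything to the growth of the operator norms $\|D^n\|$, analyzed through the action of $D$ on monomials. Writing $d_k:=\|z^k\|_{(p,\alpha,m)}/k!$ and using $D^n z^k=\frac{k!}{(k-n)!}z^{k-n}$, one gets for every $k\ge n$ the elementary lower bound $\|D^n\|\ge \|D^nz^k\|_{(p,\alpha,m)}/\|z^k\|_{(p,\alpha,m)}=d_{k-n}/d_k$. When $m=1$, the estimate \eqref{lm1} gives $d_k\simeq\alpha^{-k}k^{(3-p)/(2p)}$, hence $d_{k-n}/d_k\simeq\alpha^{n}\big(\tfrac{k-n}{k}\big)^{(3-p)/(2p)}$. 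First I would dispose of the cases where (i) must fail, i.e.\ where (ii) does not hold. Since $D$ is bounded, $m\le 1$. If $m=1$ and $\alpha>1$, taking $k=2n$ gives $d_{k-n}/d_k\simeq\alpha^n2^{-(3-p)/(2p)}\to\infty$, so $\sup_n\|D^n\|=\infty$ and $D$ is not power bounded. If $m=1$, $\alpha=1$ and $p>3$, the exponent is negative, so taking $k=n+1$ gives $d_1/d_{n+1}\simeq(n+1)^{(p-3)/(2p)}\to\infty$, and again $D$ is not power bounded. In both subcases (i) fails, consistently with (ii).

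Next I would treat the cases where (ii) holds. If $m<1$, then $D$ is compact; its only possible eigenfunctions are $e^{\lambda z}$, and for $\lambda\ne0$ these have order $1>m$ and so do not belong to $\mathcal{F}^p_{(\alpha,m)}$. Thus $D$ has no nonzero eigenvalue, whence $\sigma(D)=\{0\}$, $D$ is quasi-nilpotent, and $\|D^n\|\to0$. If $m=1$ and $\alpha<1$, I would prove the matching upper estimate $\|D^n\|\lesssim\sup_{k\ge n}d_{k-n}/d_k$; since here $d_{k-n}/d_k\simeq\alpha^n(\tfrac{k-n}{k})^{(3-p)/(2p)}$, the factor $\alpha^n$ forces the supremum to $0$, giving again $\|D^n\|\to0$. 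In either case $D$ is power bounded, and the Cesàro averages satisfy $\|\tfrac1n\sum_{k=1}^nD^k\|\le\tfrac1n\sum_{k=1}^n\|D^k\|\to0$, so $D$ is uniformly mean ergodic with limit projection $0$; thus (i) holds.

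There remains the boundary case $m=1$, $\alpha=1$, $p\le 3$, where (ii) fails. Here the exponent $(3-p)/(2p)$ is nonnegative, so the same upper estimate yields $\|D^n\|\lesssim\sup_{k\ge n}(\tfrac{k-n}{k})^{(3-p)/(2p)}\le1$: $D$ is power bounded, but now $\|D^n\|\not\to0$. To exclude uniform mean ergodicity I would use the spectrum: for every $|\lambda|<1$ the function $e^{\lambda z}$ lies in $\mathcal{F}^p_{(1,1)}$, since $e^{p\,\mathrm{Re}(\lambda z)-p|z|}\le e^{-p(1-|\lambda|)|z|}$ is integrable, and is an eigenvector, so the open unit disk is contained in $\sigma(D)$ and $1$ is a non-isolated boundary point of $\sigma(D)$. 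By the structure theory of power-bounded uniformly mean ergodic operators (for such an operator $1$ must lie in the resolvent set or be a simple pole of the resolvent, hence be isolated in the spectrum), $D$ cannot be uniformly mean ergodic. Hence (i) fails, and combining the four cases proves the equivalence.

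The hard part will be the upper bound $\|D^n\|\lesssim\sup_{k\ge n}d_{k-n}/d_k$ used in the positive and boundary cases. For $p=2$ the monomials are orthogonal, $D^n$ is literally a weighted backward shift, and this is an equality; but for $p\ne2$ the $\mathcal{F}^p$ norm is not controlled by the Taylor coefficients, so the estimate requires either a Banach-space weighted-shift comparison or a direct integral-mean/Cauchy estimate for $\|f^{(n)}\|_{(p,\alpha,1)}$ (the crude triangle-inequality bound on $\sum_j a_{j+n}\frac{(j+n)!}{j!}z^j$ diverges and is useless). A secondary dependency, rather than a computation, is the appeal to the spectral characterization of uniformly mean ergodic operators in the boundary case.
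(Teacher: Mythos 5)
Your necessity direction is essentially sound, and in places more self-contained than the paper's. The lower bound $\|D^n\|\ge \|D^nz^k\|_{(p,\alpha,1)}/\|z^k\|_{(p,\alpha,1)}=d_{k-n}/d_k$ correctly rules out power boundedness when $m=1,\alpha>1$ and when $m=1,\alpha=1,p>3$ (the paper instead handles all of $\alpha\ge 1$ at once through the spectrum). In the boundary case $m=1$, $\alpha=1$, $p\le 3$ your spectral argument --- eigenfunctions $e^{\lambda z}\in\mathcal{F}^p_{(1,1)}$ for $|\lambda|<1$ put the closed unit disk inside $\sigma(D)$, so $1$ is an accumulation point of the spectrum, which is incompatible with power boundedness plus uniform mean ergodicity --- is exactly the paper's argument for $(i)\Rightarrow(ii)$, which cites Dunford's theorem for the last step; note this argument alone refutes (i) there, so your separate claim of power boundedness in that case is inessential. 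Likewise, for $m<1$ you reprove directly (compactness plus absence of nonzero eigenvalues, hence $\sigma(D)=\{0\}$ and $\|D^n\|^{1/n}\to 0$) what the paper imports from the reference on the differential operator.

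The genuine gap is the case $m=1$, $\alpha<1$, the one place where an upper bound on $\|D^n\|$ with constants uniform in $n$ is indispensable. Everything you do there rests on the inequality $\|D^n\|\lesssim\sup_{k\ge n}d_{k-n}/d_k$, which you never prove and which you yourself flag as ``the hard part'': for $p\ne 2$ the monomials are not an unconditional basis realizing the space as a weighted $\ell^p$ sum, $D^n$ is not literally a weighted backward shift, and, as you observe, the coefficient-wise triangle inequality diverges. This is precisely the difficulty the paper's proof circumvents: instead of a shift comparison it invokes the Cauchy-type estimate of Beltr\'an (the inequality (5.3) in the proof of Proposition 5.9 of that paper),
\begin{equation*}
\|D^kf\|_{(p,\alpha,1)}^p\le \frac{(k!)^p e^{p\alpha k}}{k^{pk}}\,\|f\|_{(p,\alpha,1)}^p,
\end{equation*}
which by Stirling's formula gives $\|D^k\|\lesssim k^{p/2}e^{-(1-\alpha)k}\to 0$; power boundedness and uniform mean ergodicity then follow by summing the norms of the Ces\`aro averages. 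That bound is weaker than your conjectured $\alpha^n$-type bound (indeed $\alpha\le e^{-(1-\alpha)}$ for $0<\alpha<1$), but it is proved and it suffices. Until you establish your upper bound --- for instance by the route you hint at, a Cauchy estimate $|f^{(n)}(z)|\le n!\,r^{-n}M_\infty(f,|z|+r)$ with $r$ optimized in $n$, which is essentially how Beltr\'an's inequality is obtained, or simply by importing that inequality --- the implication $(ii)\Rightarrow(i)$ is unproven in the only nontrivial case, so the proposal as written does not yet prove the theorem.
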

\begin{proof}
We first show that $(ii)$ implies $(i)$.  Since $D$ is bounded, we have $m\leq 1$. If $m<1$, then the operator is compact and by  \cite[Theorem 1.2]{Diff}, its spectrum  $\sigma (D)$  contains only the zero element. By the spectral formula, there exist $\delta <1$ and $N \in \mathbb{N}$ such that
\begin{align}
\label{forresolvent}
\|D^n\|\leq \delta^n \ \ \text{for all} \ n \geq N,
\end{align} and therefore, $D$ is power bounded and uniformly mean ergodic in this case.  For the case when $m=1$ and $\alpha <1$, arguing  as in  (5.3) in  \cite[Proposition~5.9]{Maria}, we get
\begin{align*}
\|D^kf\|_{(p, \alpha, 1)}^p \leq \frac{ (k!)^pe^{p\alpha k}}{k^{pk}}\|f\|_{(p, \alpha, 1)}^p.
\end{align*}
Using this along with Stirling formula \eqref{factorial}, we deduce
\begin{align} \label{estdiff}
\|D^k\|\lesssim \frac{k^{\frac{p}{2}}}{e^{(1-\alpha )k}} \to 0
\end{align}  as $k\to \infty $ for $\alpha <1$, and hence the operator is power bounded in this case as well.
Now,  applying  the  estimate  in \eqref{estdiff},
\begin{align*}
\bigg\| \frac{1}{n}\sum_{k=1}^n D^k\bigg\| \leq \frac{1}{n}\sum_{k=1}^n\|D^k\|
\lesssim \frac{1}{n}\sum_{k=1}^n \frac{k^{\frac{p}{2}}}{e^{(1-\alpha )k}} \to 0
\end{align*}as $n \to \infty$, from which it follows that $D$ is uniformly mean ergodic.

Now we show that (i) implies (ii). Assume that $m=1$.  Using exponential functions $e_{\beta}(z)=e^{\beta z}, \ |\beta| < \alpha$, we get $\overline{D(0,\alpha )} \subset \sigma (D)$, where $D(0,\alpha )$ is a disc with center $0$ and radius $\alpha$. Thus, $1$ is an accumulation point of $\sigma (D)$  whenever  $\alpha \geq 1$. Since $D$ is assumed to be power bounded, we apply  \cite[Theorem 3.16]{Dunford} (see also \cite[Theorem 2.7]{Kr}), to get that the operator cannot be uniformly mean ergodic in this case. Therefore, we must have  $\alpha < 1$.
\end{proof}

\section{Dynamics of the Volterra-type integral  operator on  $\mathcal{F}^p_{(\alpha, m)}$}

In this section we investigate the dynamics of Volterra-type integral operators
$$V_g f(z)= \int_{0}^z g'(w) f(w)dw.$$
Various aspects of the   operator   which includes boundedness, compactness, and spectra  have  been well   studied  in large class of  function spaces; see for examples   \cite{BT,Olivia2,Olivia,TM5,MW, TM3} and the references therein.  Much less is  known about its  dynamical and mean ergodic properties,  except in the special case when  the symbol $g$ is   the identity map. The  fact that the iterates of the operator involve multiple  integrals makes it difficult to get best possible  estimates of the  norms. In this paper,  we begin the study of the dynamical properties of $V_g$ on the  spaces $\mathcal{F}_{(\alpha, m)}^p$.  It was  shown  \cite{Olivia} (see also \cite{Olivia2}), and in \cite{BT} for $p=\infty$, that $V_g$ is bounded on   $\mathcal{F}_{(\alpha, m)}^p$ if and only if $g$ is a complex polynomial  of degree $l$ not bigger than $m$ ($l \leq m$), and $V_g$  is compact in this space if and only if the degree $l$ of $g$ is strictly smaller than $m$ or $m$ is not a positive integer.

\begin{proposition}
\label{prop2}
Let $1\leq p< \infty$. Let $V_g$ be bounded on $\mathcal{F}^p_{(\alpha, m)}$ and hence
$g(z)= a_lz^l+a_{l-1}z^{l-1}+...+a_1z+a_0, \ l \leq m$.   Then
\begin{enumerate}
\item $V_g$ is   not supercyclic  on $\mathcal{F}^p_{(\alpha, m)}$ and hence not hypercyclic.
\item If $g(z)=az^l+b, a \neq 0,$ then $V_g$ is cyclic if and only if $l=1$.
 \end{enumerate}
\end{proposition}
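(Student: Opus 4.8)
The plan is to prove the two parts by different elementary principles. For part (i) I would establish the stronger structural fact that $V_g$ has non-dense range, which already excludes supercyclicity. Since $(V_gf)(0)=\int_0^0 g'(w)f(w)\,dw=0$ for every $f\in\mathcal{F}^p_{(\alpha,m)}$, the range of $V_g$ is contained in the kernel of the point-evaluation functional $\delta_0\colon f\mapsto f(0)$, a proper closed subspace because $\delta_0$ is a nonzero continuous functional on $\mathcal{F}^p_{(\alpha,m)}$. If some $f$ were supercyclic, then the tail $\{\lambda V_g^nf:\lambda\in\CC,\ n\geq 1\}$ of its projective orbit would lie in $\overline{\mathrm{Ran}\,V_g}$, while the term $n=0$ contributes only the line $\CC f$; hence the whole projective orbit would sit inside the union $\CC f\cup\overline{\mathrm{Ran}\,V_g}$ of two proper closed subspaces. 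Such a union is closed and, since a vector space is never the union of two proper subspaces, it is a proper subset of $\mathcal{F}^p_{(\alpha,m)}$ and so cannot be dense. This contradiction yields (i), and non-hypercyclicity follows because hypercyclicity is stronger than supercyclicity. Note that this argument uses nothing beyond the fact that $V_g$ integrates from the origin, so it covers every bounded $V_g$.

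For part (ii) I would handle the two implications separately. When $l=1$ we have $g'\equiv a$ and $V_gz^n=\frac{a}{n+1}z^{n+1}$, so an easy induction gives $V_g^k 1=\frac{a^k}{k!}\,z^k$ for the constant function $1$. Since $a\neq 0$, the linear span of the orbit $\{V_g^k 1:k\geq 0\}$ is exactly the set of all polynomials, which is dense in $\mathcal{F}^p_{(\alpha,m)}$; thus $1$ is a cyclic vector and $V_g$ is cyclic.

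For the converse I would rule out cyclicity when $l\geq 2$ by producing a two-dimensional kernel for the adjoint $V_g^*$. The identity $(V_gf)'=g'f$ yields both $(V_gf)(0)=0$ and $(V_gf)'(0)=g'(0)f(0)$, and for $l\geq 2$ one has $g'(0)=0$, so in fact $(V_gf)'(0)=0$ as well. Therefore the linearly independent continuous functionals $\varphi_0\colon f\mapsto f(0)$ and $\varphi_1\colon f\mapsto f'(0)$ both annihilate $\mathrm{Ran}\,V_g$, that is, $\varphi_0,\varphi_1\in\ker V_g^*$. If some $f$ were cyclic, then $\psi:=\varphi_0(f)\varphi_1-\varphi_1(f)\varphi_0$ would vanish on the whole orbit $\{V_g^nf:n\geq 0\}$ (the terms $n\geq 1$ because $V_g^{*}\varphi_i=0$, the term $n=0$ by direct cancellation), hence $\psi=0$ by cyclicity; independence of $\varphi_0,\varphi_1$ then forces $\varphi_0(f)=\varphi_1(f)=0$, and consequently $\varphi_0$ itself annihilates the entire orbit, giving $\varphi_0=0$, a contradiction. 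This is just the classical principle that the adjoint of a cyclic operator has one-dimensional eigenspaces (see \cite{BM}), here applied to the eigenvalue $0$.

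The computations are all routine, and the only point deserving genuine care is the continuity on $\mathcal{F}^p_{(\alpha,m)}$, for every $1\leq p<\infty$, of the functionals $\delta_0$, $\varphi_0$ and especially the derivative evaluation $\varphi_1$. I would justify this by recalling that these generalized Fock spaces embed continuously into the Fr\'echet space of entire functions with the compact-open topology, so evaluation of a function and of its derivative at the origin are continuous; the monomial norm estimates behind \eqref{estmono} make the required bounds explicit. With that in hand the proof reduces to the two soft principles above—supercyclic operators have dense range, and cyclic operators have simple adjoint point spectrum—together with the one-line identities $(V_gf)(0)=0$ and $(V_gf)'(0)=g'(0)f(0)$.
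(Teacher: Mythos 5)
Your proof is correct, and it rests on the same structural fact as the paper's — that $\mathrm{Ran}\,V_g$ kills low-order Taylor coefficients at the origin — but your execution differs in two places, and in both it is somewhat more careful than the printed argument. In part (i) the paper asserts that the projective orbit consists only of functions vanishing at zero, which glosses over the $n=0$ terms $\lambda f$ when $f(0)\neq 0$; your refinement, placing the orbit inside $\CC f\cup\ker\delta_0$ and noting that a union of two proper closed subspaces is closed and proper, patches this cleanly and yields a statement valid for any operator with non-dense range. In part (ii) the forward direction is identical (the orbit of $\mathbf{1}$ spans the polynomials, which are dense). For the converse, the paper constructs, for each candidate cyclic vector $f$, a functional annihilating the $(l-1)$-th Taylor polynomial of $f$ together with the monomials $z^k$, $k\geq l$; you instead observe that $\varphi_0\colon f\mapsto f(0)$ and $\varphi_1\colon f\mapsto f'(0)$ both lie in $\ker V_g^*$ when $l\geq 2$ and invoke the classical principle that a cyclic operator has an adjoint with one-dimensional eigenspaces. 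The two arguments are equivalent in substance — your $\psi=\varphi_0(f)\varphi_1-\varphi_1(f)\varphi_0$ is precisely the paper's tailored functional in the case $l=2$ — but yours needs only two fixed functionals independently of $l$ and is packaged as a reusable general principle, whereas the paper's construction is ad hoc for each $f$. Your concern about continuity of $\varphi_0$ and $\varphi_1$ is legitimate and easily discharged without appealing to the compact-open embedding: exactly as in the paper's proof of Theorem~\ref{thm4}, the Cauchy inequalities give $|a_k|\,\|z^k\|_{(p,\alpha,m)}\leq \|f\|_{(p,\alpha,m)}$ for every Taylor coefficient $a_k$ of $f$, so the coefficient functionals $f\mapsto a_0$ and $f\mapsto a_1=f'(0)$ are bounded with norms at most $\|\mathbf{1}\|_{(p,\alpha,m)}^{-1}$ and $\|z\|_{(p,\alpha,m)}^{-1}$ respectively.
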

\begin{proof} (i) Since $V_gf(0)=0$ for every $f$ in $\mathcal{F}^p_{(\alpha, m)}$,  the projective orbit of $f$ under  $V_g$ contains only functions that vanish at zero. Thus, it  cannot be dense in $\mathcal{F}_{(\alpha, m)}^p$,  which implies that $V_g$ is not  supercyclic on  $\mathcal{F}^p_{(\alpha, m)}$ and hence not hypercyclic either.  \\

(ii) If $l=1$, then  $(V_g)^n(\textbf{1})(z)=\frac{a^n z^n}{n!}$. Hence
$$\{(V_g)^n(\textbf{1}), n\geq 0\}=\{1, az, \frac{a^2 z^2}{2},\cdots, \frac{a^n z^n}{n!}, \cdots \}.$$ The linear span of the  latter   is known to be dense in $\mathcal{F}_{(\alpha, m)}^p$.

If $l > 1$ and $f$ belongs to $\mathcal{F}^p_{(\alpha, m)}$, using a functional that annihilates the $(l-1)$-th Taylor polynomial of $f$, together with the monomials $z^k, k \geq l,$ we conclude that $f$ cannot be a cyclic vector for $V_g$.
\end{proof}

\noindent The class of Volterra-type integral operators  include the classical integration operator $Jf(z)= \int_{0}^z f(w)dw$  in particular when $g(z)=z$.  By \cite{Olivia, TM3}, \cite{AT} or \cite[Lemma 1.1]{MW},  it follows that  $J$  is bounded on $\mathcal{F}^p_{(\alpha, m)}$ if and only if $m\geq 1$ and compact if and only if $m> 1$.  These conditions are opposite to the corresponding conditions for the differentiation operator $D$ except  when $m=1$,  in  which case both $J$ and $D$ are bounded. Clearly $DJf= f$ and $JDf(z)= f(z)-f(0)$ for all  $z\in \CC$ and $f$ in $ \mathcal{F}^p_{(\alpha, m)}$.
Observe  that while $D$ is supercyclic, there exists no vector whose projective orbit under  $J$ is dense in $\mathcal{F}^p_{(\alpha, m)}$.

 To state our next main result, we first recall some definitions.  Denote by $\mathcal{H}(\CC)$ the set of entire functions on $\CC$. For  $r\geq 0$ and each  $f\in \mathcal{H}(\CC)$,  set
\begin{align*}
M_\infty (f,r)= \sup_{|z|=r} |f(z)|,
\end{align*} and  define  the growth type space $\mathcal{F}^\infty_{(\alpha, m)} $ as the space of functions $f\in \mathcal{H}(\CC)$ such that
\begin{align*}
 \|f\|_{(\infty, \alpha, m)}= \sup_{r>0} e^{-\alpha r^m} M_\infty(f,r) <\infty.
\end{align*}
The estimate corresponding to \eqref{estmono} becomes
\begin{align}
\label{estinf}
\|z^n\|_{(\infty, \alpha, m)}\simeq \bigg(\frac{n}{me\alpha }\bigg)^{\frac{n}{m}}.
\end{align}
For $m=1$, the space $\mathcal{F}^p_{(\alpha, 1)}$ is denoted by  $B_{p,p}(\alpha)$ in \cite{Maria}.
To simplify the notation below, we write the symbol $g(z)= a_lz^l+a_{l-1}z^{l-1}+...+a_1z+a_0$, with $l \leq m$,  as $g= g_l+ g_{l-1}$ where  $g_l(z)=a_l z^l$ and $g_{l-1}(z)=a_{l-1}z^{l-1}+...+a_1z+a_0$. Then the operators on $\mathcal{F}^\infty_{(\alpha, m)} $ satisfy
  \begin{align*}V_g= V_{g_l}+ V_{g_{l-1}},
  \end{align*}  of which $V_{g_{l-1}}$ is always compact and quasi-nilpotent.

Following \cite{AAB}, for each $\lambda \in \CC$, $a \in \CC$ and $m \in \NN$,  the  operator $K_\lambda$ is defined on  $\mathcal{H}(\CC)$  by
\begin{align*}
K_\lambda f(z)= a m e^{\lambda z^m} \int_{0}^z e^{-\lambda w^m} w^{m-1} f(w) dw
= a m z^m \int_{0}^1 e^{\lambda z^m(1- t^m)} t^{m-1} f(tz) dt
 \end{align*} Thus,  when $\lambda= 0$ $m=l \in \NN$ and $a=a_l$, then $K_\lambda$  is just  the  Volterra-type integral operator $V_{g_l}$.

 \begin{lemma}
\label{lem2} Let $  m\geq 1, m \in \NN$.
\begin{enumerate}
\item If $ \ |\lambda|<\alpha$, then  the operator $K_\lambda$ is continuous on $\mathcal{F}^\infty_{( \alpha, m)}$ with operator norm $\|K_\lambda\|\leq \frac{|a|}{\alpha-|\lambda|} $.

\item If $l=m \in \NN$, then $V_{g_l}$ is continuous on $\mathcal{F}^\infty_{( \alpha, m)}$ and its operator norm satisfies $\|V_{g_l}\| \leq \frac{|a_l|}{\alpha} $.
\end{enumerate}
\end{lemma}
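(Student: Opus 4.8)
The plan is to work throughout with the second integral representation of $K_\lambda$, namely
\begin{align*}
K_\lambda f(z)= a m z^m \int_{0}^1 e^{\lambda z^m(1- t^m)} t^{m-1} f(tz)\, dt,
\end{align*}
because after the change of variable $w=tz$ the integration variable $t$ runs over the fixed real interval $[0,1]$, so the estimate becomes uniform in the argument of $z$. First I would fix $|z|=r$ and dominate the integrand pointwise. For the exponential factor, since $0\le t\le 1$ forces $1-t^m\ge 0$ and $\operatorname{Re}(\lambda z^m)\le |\lambda z^m|=|\lambda| r^m$, one gets $|e^{\lambda z^m(1-t^m)}|=e^{\operatorname{Re}(\lambda z^m)(1-t^m)}\le e^{|\lambda| r^m(1-t^m)}$. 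For the factor $f(tz)$, the very definition of the norm gives $|f(tz)|\le M_\infty(f,tr)\le \|f\|_{(\infty,\alpha,m)}\, e^{\alpha t^m r^m}$.

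Combining these two bounds I would arrive at
\begin{align*}
|K_\lambda f(z)|\le |a| m r^m \|f\|_{(\infty,\alpha,m)}\, e^{|\lambda| r^m}\int_0^1 t^{m-1} e^{(\alpha-|\lambda|)t^m r^m}\, dt.
\end{align*}
The remaining integral is elementary: the substitution $u=t^m$, for which $t^{m-1}\,dt=du/m$, turns it into $\frac{1}{m}\int_0^1 e^{(\alpha-|\lambda|)r^m u}\,du=\frac{e^{(\alpha-|\lambda|)r^m}-1}{m(\alpha-|\lambda|)r^m}$, where the hypothesis $|\lambda|<\alpha$ ensures the denominator is nonzero. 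Substituting this back and cancelling the factors $m$ and $r^m$ yields
\begin{align*}
|K_\lambda f(z)|\le \frac{|a|}{\alpha-|\lambda|}\|f\|_{(\infty,\alpha,m)}\bigl(e^{\alpha r^m}-e^{|\lambda| r^m}\bigr).
\end{align*}

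Since $|\lambda|<\alpha$ gives $e^{\alpha r^m}-e^{|\lambda| r^m}\le e^{\alpha r^m}$, multiplying by $e^{-\alpha r^m}$ and taking the supremum over $r>0$ produces exactly $\|K_\lambda f\|_{(\infty,\alpha,m)}\le \frac{|a|}{\alpha-|\lambda|}\|f\|_{(\infty,\alpha,m)}$, which is statement (i). Statement (ii) then follows by specialization: taking $\lambda=0$, $a=a_l$ and $l=m$, the remark preceding the lemma identifies $K_0$ with $V_{g_l}$, and the bound from (i) reads $\|V_{g_l}\|\le \frac{|a_l|}{\alpha-0}=\frac{|a_l|}{\alpha}$.

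I do not expect a serious obstacle here, as the computation is routine once the right representation is chosen; the main point requiring care is the domination of the exponential kernel, where one must exploit $1-t^m\ge 0$ on $[0,1]$ so that replacing $\operatorname{Re}(\lambda z^m)$ by the upper bound $|\lambda| r^m$ preserves the inequality. Working instead from the first representation $a m e^{\lambda z^m}\int_0^z e^{-\lambda w^m} w^{m-1} f(w)\,dw$ would force one to estimate a contour integral in $w$ rather than a fixed real integral, and is therefore less convenient.
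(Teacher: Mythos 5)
Your proposal is correct and follows essentially the same route as the paper: both work from the representation $K_\lambda f(z)= a m z^m \int_{0}^1 e^{\lambda z^m(1- t^m)} t^{m-1} f(tz)\, dt$, bound the kernel by $e^{|\lambda| r^m(1-t^m)}$ and $|f(tz)|$ via the weighted sup-norm, and reduce to the same elementary integral, with part (ii) obtained by specializing $\lambda=0$, $a=a_l$, $l=m$. The only cosmetic difference is bookkeeping: the paper multiplies by $e^{-\alpha r^m}$ before integrating and drops the term $e^{(\alpha-|\lambda|)r^m(t^m-1)}\le 1$ inside the integral, whereas you evaluate the integral exactly and discard the $-e^{|\lambda|r^m}$ term at the end.
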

\begin{proof}
(i) For $r>0$, we have
\begin{align*}
e^{-\alpha r^m} M_\infty(K_\lambda f, r) = e^{-\alpha r^m} |a| m r^m \int_{0}^1  e^{|\lambda| r^m(1- t^m)} t^{m-1} M_\infty(f, rt) dt  \\
= |a| m r^m \int_{0}^1 t^{m-1} e^{-\alpha (tr)^m} e^{(\alpha - |\lambda|) r^m(t^m-1)} M_\infty(f, rt) dt  \\
\leq |a| m r^m \|f\|_{(\infty,\alpha,m)} \int_{0}^1 t^{m-1} e^{(\alpha - |\lambda|) r^m(t^m-1)} dt \leq
\frac{|a|}{\alpha-|\lambda|} \|f\|_{(\infty,\alpha,m)}.
\end{align*}
Therefore
$$
\|K_\lambda f\|_{(\infty,\alpha,m)} = \sup_{r>0}e^{-\alpha r^m} M_\infty(K_\lambda f, r) \leq 
\frac{|a|}{\alpha-|\lambda|} \|f\|_{(\infty,\alpha,m)}.
$$

(ii) This is a direct consequence of part (i) for $l=m$, $a= a_l$ and $\lambda = 0$.
\end{proof}

\begin{theorem}\label{thm2}
Let $1\leq p\leq \infty $, $m \geq 1$ and $l \in \NN$. Assume that the operator  $ V_g$ is bounded on $\mathcal{F}^p_{(\alpha, m)} $, with $g(z)= g_l(z)+ g_{l-1}(z), \ g_l(z)=a_l z^l, \ l \leq m$. Then  
\begin{enumerate}
\item If $m>1$, $1\leq p\leq \infty $ and $l < m$, then $ V_g$ is compact, quasi-nilpotent, hence power bounded, and uniformly mean ergodic  on $\mathcal{F}^p_{(\alpha, m)}$.
\item If $l=m \in \mathbb{N}$ and $p=\infty$, then $V_g$ is power bounded if and only if $|a_l| \leq \alpha$.
\item If $l=m \in \mathbb{N}$, $p=\infty$ and $|a_l| \leq \alpha$, then $V_{g_l}$ is uniformly mean ergodic  if and only if $|a_l| < \alpha$.
\item If $l=m \in \mathbb{N}$ and $1\leq p < \infty $ and $V_g$ is power bounded, then $|a_l| \leq \alpha$.
\item If $l=m \in \mathbb{N}$ and $1\leq p < \infty $ and $V_{g_l}$ is power bounded and uniformly mean ergodic, then $|a_l| < \alpha$.
\end{enumerate}
\end{theorem}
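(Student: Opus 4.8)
The whole theorem is governed by one structural observation: on the monomial basis the leading operator $V_{g_l}$ is a weighted shift, and its spectrum together with the criteria for power boundedness (spectral radius at most $1$) and uniform mean ergodicity (the behaviour of the spectrum near $1$, via \cite[Theorem 3.16]{Dunford} and \cite[Theorem 2.7]{Kr}) decides every case. For part (i), where $l<m$, I would recall that $V_g$ is compact by the cited boundedness/compactness result, observe that the monomial ratios below make its weights tend to $0$ so that $V_g$ is quasi-nilpotent, and then run the spectral-radius argument already used for $D$ in Theorem \ref{thm1}: $\sigma(V_g)=\{0\}$ gives $\|V_g^n\|\le\delta^n$ eventually with $\delta<1$, whence power boundedness and uniform mean ergodicity simultaneously.

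For the remaining parts, where $l=m\in\NN$, the computation
\begin{align*}
V_{g_l}z^n= a_lm\int_0^z w^{m-1}w^n\,dw= \frac{a_lm}{n+m}\,z^{n+m}
\end{align*}
shows that $V_{g_l}$ raises the degree by $m$ and preserves each subspace $X_j=\overline{\operatorname{span}}\{z^{j+mk}:k\ge 0\}$. On $X_j$ it is a unilateral weighted forward shift whose weights have modulus $\tfrac{|a_l|m}{j+m(k+1)}\,\|z^{j+m(k+1)}\|/\|z^{j+mk}\|$; using \eqref{estmono} and $(1+\tfrac{m}{n})^{n/m}\to e$ one obtains the key ratio $\|z^{n+m}\|_{(p,\alpha,m)}/\|z^n\|_{(p,\alpha,m)}\simeq n/(m\alpha)$, so these weights converge to $|a_l|/\alpha$ and the spectral radius of $V_{g_l}$ equals $|a_l|/\alpha$. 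Power boundedness therefore forces $|a_l|\le\alpha$, which is the necessity in (ii) and (iv); for the sufficiency in (ii) I would use Lemma \ref{lem2} to get $\|V_{g_l}\|\le|a_l|/\alpha\le 1$ on $\mathcal F^\infty_{(\alpha,m)}$, the lower-order compact quasi-nilpotent summand $V_{g_{l-1}}$ being harmless.

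The culmination, parts (iii) and (v), is the boundary case $|a_l|=\alpha$, where the weights satisfy $|w_k|\to 1$. Here I would prove $\{0<|\mu|<1\}\subseteq\sigma(V_{g_l})$ by showing $V_{g_l}-\mu$ is not surjective: solving $(V_{g_l}-\mu)\sum_k c_k\hat e_k=\hat e_0$ on $X_0$ (with $\hat e_k$ the normalised monomials) forces $c_0=-1/\mu$ and $c_k=\mu^{-1}w_{k-1}c_{k-1}$, so $|c_k|=|\mu|^{-(k+1)}\prod_{i<k}|w_i|$ blows up exponentially and no preimage lies in the space. Since $V_{g_l}-\mu$ is injective for $\mu\neq0$, this puts the punctured open unit disc, hence its closure, in $\sigma(V_{g_l})$, so $1$ is a non-isolated point of the spectrum. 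For the power-bounded operator $V_{g_l}$ this rules out uniform mean ergodicity by \cite[Theorem 3.16]{Dunford}, giving (v) and the ``only if'' of (iii); when instead $|a_l|<\alpha$ the spectral radius is strictly below $1$, the Ces\`aro means converge to $0$ in operator norm, and uniform mean ergodicity holds, closing (iii).

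The hard part will be making the weighted-shift reduction rigorous in $\mathcal F^p_{(\alpha,m)}$ for $1\le p<\infty$: one must know that $\{z^n\}$ is a Schauder basis and that the residue decomposition $\bigoplus_j X_j$ is topological, so that non-surjectivity on $X_0$ yields non-surjectivity on the whole space, and one must control the monomial ratios from \eqref{estmono} uniformly. The delicacy is concentrated exactly at $|a_l|=\alpha$, where $|w_k|\to 1$ is only a limit, so that the spectrum reaches but does not exceed the unit circle; this is precisely what makes $1$ an accumulation point of $\sigma(V_{g_l})$ and mirrors the role played by the accumulation of $\sigma(D)$ at $1$ in Theorem \ref{thm1}.
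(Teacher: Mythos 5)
Your architecture (spectral input plus the Dunford accumulation-at-$1$ criterion for the necessity directions, Lemma~\ref{lem2} for the sufficiency directions, compactness plus spectral radius for part (i)) matches the paper's, but the engine you use to produce the spectral input does not work as stated, and you have flagged but not closed the hole. Concretely: the necessity in parts (ii) and (iv) rests on your claim that the spectral radius of $V_{g_l}$ equals $|a_l|/\alpha$ because it is a weighted shift whose normalized weights converge to $|a_l|/\alpha$. This requires the sequence-space model you defer to the last paragraph, and that model is not merely ``hard'' --- for $p=\infty$, which is exactly the setting of parts (ii) and (iii), it does not exist: $\mathcal{F}^\infty_{(\alpha,m)}$ is non-separable and the polynomials are not dense in it, so the monomials cannot form a Schauder basis and the closed spans $X_j$ are proper invariant subspaces from whose restrictions one cannot read off $\sigma(V_{g_l})$ or $r(V_{g_l})$. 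For $1\le p<\infty$ the basis question is genuinely delicate (it fails in borderline cases such as $p=1$ in closely related spaces), and even granting a Schauder basis, the identity ``spectral radius $=$ limit of the weights'' uses $\|T^n\|\simeq\sup_k|w_k\cdots w_{k+n-1}|$, which needs unconditionality, not mere basisness. The same objection applies to your derivation of quasi-nilpotency in part (i) from ``weights tending to $0$'', since $V_g$ there is a sum of shifts of different step lengths.

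The paper avoids all of this machinery. For (ii) it tests the iterates on $\mathbf{1}$: the exact formula $V_{g_l}^n(\mathbf{1})=a_l^nz^{ln}/n!$ together with \eqref{estinf} gives $\|V_{g_l}^n\|\gtrsim(|a_l|/\alpha)^n n^{-1/2}$, which blows up if $|a_l|>\alpha$. For (iv) it tests on $z^k$ and lets $k\to\infty$ at fixed $n$, getting $\|V_{g_l}^n\|\geq(|a_l|/\alpha)^n$ from \eqref{estmono}; no spectral theory or basis enters. For (iii) and (v) it simply cites the known descriptions $\sigma(V_{g_l})=\{|\lambda|\le|a_l|/\alpha\}$ from \cite{BJ} ($p=\infty$) and \cite{Olivia2} ($p<\infty$) before applying \cite[Theorem 3.16]{Dunford}, exactly as you do. I would add that your non-surjectivity argument for the boundary case is sound and, suitably rephrased, needs no basis at all: any preimage of $\mathbf{1}$ under $V_{g_l}-\mu$ is an entire function whose Taylor coefficients are forced by the recursion, giving explicitly $f=-\mu^{-1}e^{(a_l/\mu)z^m}$, and the Cauchy-inequality bound $|a_k|\,\|z^k\|_{(p,\alpha,m)}\lesssim\|f\|_{(p,\alpha,m)}$ (used in the paper's proof of Theorem~\ref{thm4}) shows this function lies outside $\mathcal{F}^p_{(\alpha,m)}$ whenever $|\mu|<|a_l|/\alpha$. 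That argument yields $\sigma(V_{g_l})\supseteq\{|\mu|\le|a_l|/\alpha\}$ for every $1\le p\le\infty$, and since power boundedness forces $r(V_{g_l})\le 1$, it would recover the necessity in (ii) and (iv) as well as the accumulation point at $1$ needed for (iii) and (v), replacing both your shift-based spectral radius claim and the paper's citations. As written, however, parts (i), (ii) and (iv) of your proposal are incomplete.
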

\begin{proof}
(i) From  results in  \cite{BT,TM3}  for  $p= \infty$ and in \cite{Olivia} for $1\leq p<\infty$, the operator $V_{g}$ is compact on $\mathcal{F}_{(\alpha, m)}^p $ for all $1\leq p\leq \infty$, since $l < m$. Moreover, by \cite[Theorem]{BJ} and \cite[Theorem~1]{Olivia2},  we have  $\sigma (V_{g})= \{ 0\}$, hence $V_g$ is quasi-nilpotent. By  the spectral radius formula, there exist $\beta<1$ and $N \in \mathbb{N}$ such that
\begin{align}
\label{uni}
\|V_{g}^n\|\leq \beta^n \ \ \text{for all} \ n \geq N.
\end{align}
This shows that $V_{g}$ is power bounded.

The operator $V_{g}$ is  also  uniformly mean ergodic in this case. Indeed, an application of  \eqref{uni}  yields, for some $C>0$ depending on $N$,
\begin{align*}
\frac{1}{n}\bigg \|\sum_{k=1}^nV_{g}^k\bigg\|\leq \frac{1}{n}\sum_{k=1}^n \big\|V_{g}^k\big\|\leq \frac{C}{n} + \frac{\beta}{n(1-\beta)} \to 0 \ \text{as}\  n\to \infty.
\end{align*}

(ii) Now $p=\infty$ and $l=m \in \NN$. Assume that $|a_l| \leq \alpha$. By  Lemma~\ref{lem2} (ii) we have, for each $n \in \NN$,
\begin{align*}
\|V_{g_{l}}^n\| \leq |a_l|^n/\alpha^n\leq 1.
\end{align*}
This implies that $V_{g_{l}}$ is power bounded. Since the sum of a power bounded operator and a quasi-nilpotent operator is power bounded, we conclude that $V_g$ is power bounded.

Conversely, suppose that $V_g$ is power bounded. Since $V_{g_{l-1}}$ is quasi-nilpotent by part (i), we get that $V_{g_{l}}$ is power bounded. However, for each positive integer $n$,  a straightforward  integral computation  gives
  \begin{align*}
  V_{g_{l}}^n(\textbf{1})(z)= \frac{l^n(a_l)^n z^{ln}}{\prod_{j=1}^n (jl)}= \frac{ (a_l)^n z^{ln}}{n!}
  \end{align*} from which and \eqref{estinf} we have
    \begin{align*}
\|V_{g_{l}}^n\|\geq \frac{\|V_{g_l}^n\textbf{1}\|_{(\infty, \alpha, l)}}{\|\textbf{1}\|_{(\infty, \alpha, l)}}
\simeq  \frac{|a_l|^n}{n! (\alpha el)^n}  (nl)^{n}
\simeq \frac{|a_l|^n}{\alpha^n} \frac{1}{\sqrt n} \to \infty \quad \quad \quad \quad  \quad \quad
\end{align*}as $n\to \infty$ whenever $|a_l|>\alpha $, a contradiction. Thus $|a_l| \leq \alpha$.

(iii) In this part we assume $p=\infty$, $l=m \in \NN$ and  $|a_l| \leq \alpha$. We first suppose that $|a_l| < \alpha$. By  Lemma~\ref{lem2} we have, for each $n \in \NN$,
\begin{align*}
\|V_{g_{l}}^n\| \leq |a_l|^n/\alpha^n.
\end{align*}
Hence
\begin{align*}
\frac{1}{n} \bigg\|\sum_{k=1}^n V_{g_{l}}^k\bigg\| \leq \frac{1}{n} \sum_{k=1}^n \big\|V_{g_{l}}^k\big\|\leq  \frac{|a_l|}{n(\alpha-|a_l|)} \to 0  \ \text{as}\ \ n\to \infty,
\end{align*}
and $V_{g_{l}}$ is uniformly mean ergodic.

Conversely, suppose that $V_{g_{l}}$ is uniformly mean ergodic and that $|a_l| \leq \alpha$. Part (ii) implies that $V_{g_{l}}$ is power bounded. From \cite{BJ} it follows that the spectrum  $ \sigma(V_{g_{l}}) =\{\lambda \in \CC:  |\lambda|\leq \frac{|a_l|}{\alpha}\}$. Thus, if $|a_l|= \alpha $, then $1$ is an accumulation point of $\sigma (V_{g_{l}})$. By \cite[Theorem 3.16]{Dunford} (see also \cite[Theorem 2.7]{Kr}), the operator $V_{g_{l}}$ is not uniformly mean ergodic. Therefore $|a_l|< \alpha $.

(iv) If $l=m \in \mathbb{N}$ and $1\leq p < \infty $ and $V_g$ is power bounded, then $V_{g_{l}}$ is power bounded since $V_{g_{l-1}}$ is quasi-nilpotent by part (i). Now,
an integral computation again   gives, for each positive integers $k$ and $n$,
  \begin{align*}
  V_{g_{l}}^n(z^k)= \frac{(a_l)^n l^n z^{ln+k}}{\prod_{j=1}^n (jl+k)}
  \end{align*} and hence
  \begin{align}
  \label{goodest}
  \| V_{g_l}^n\| \gtrsim  \limsup_{k\to \infty}\frac{\|V_{g_l}^n(z^k)\|_{(p,\alpha, l)}}{ \|z^k\|_{(p,\alpha, l)}}=  \limsup_{k\to \infty}\frac{|a_l|^n l^n \|z^{nl+k}\|_{(p,\alpha, l)}}{\prod_{j=1}^n (jl+k) \|z^k\|_{(p,\alpha, l)}}\quad \nonumber \\
  \geq \limsup_{k\to \infty}\frac{|a_l|^n l^n\|z^{nl+k}\|_{(p,\alpha, l)}}{(nl+k)^n  \|z^k\|_{(p,\alpha, l)}}, \quad \quad \quad
  \end{align} where the last inequality follows since
   \begin{align*}
  \prod_{j=1}^n (jl+k)= e^{\sum_{j=1}^n \log (jl+k)} \leq e^{n\log (nl+k)}= (nl+k)^n .
  \end{align*}
  Applying the norm estimate in \eqref{estmono},
  \begin{align*}
  \frac{\|z^{nl+k}\|_{(p,\alpha, l)}}{ \|z^k\|_{(p,\alpha, l)}} \simeq \frac{(nl+k)^n}{(e\alpha)^n} \Big(1+ \frac{nl}{k}\Big)^{\frac{k}{l}+\frac{2}{pl}-\frac{1}{2p}}
  \end{align*}
     and plugging this in \eqref{goodest} and  making further simplifications
    \begin{align*}
    \| V_{g_l}^n\| \geq  \frac{|a_l|^n}{\alpha^n} \limsup_{k\to \infty}
    \frac{\Big(1+ \frac{nl}{k}\Big)^{\frac{k}{l}}}{e^n} \Big(1+ \frac{nl}{k}\Big)^{\frac{2}{pl}-\frac{1}{2p}} \quad \quad \quad \quad \quad \quad \\
    \geq\frac{|a_l|^n}{\alpha^n} \limsup_{k\to \infty}
    \frac{\Big(1+ \frac{nl}{k}\Big)^{\frac{k}{l}}}{e^n} = \frac{|a_l|^n}{\alpha^n},
    \end{align*} which yields $\alpha \geq |a_l|$.

(v) Suppose that $V_{g_{l}}$ is power bounded and uniformly mean ergodic.  It follows from \cite{Olivia2} that the spectrum  $ \sigma(V_{g_{l}}) =\{\lambda \in \CC:  |\lambda|\leq \frac{|a_l|}{\alpha}\}$. Hence, if $|a_l|= \alpha $, then $1$ is an accumulation point of $\sigma (V_{g_{l}})$. By \cite[Theorem 3.16]{Dunford} (see also \cite[Theorem 2.7]{Kr}), the operator $V_{g_{l}}$ is not uniformly mean ergodic. This implies $|a_l|< \alpha $.
\end{proof}

\begin{corollary}\label{cor2}
Let $1\leq p\leq \infty $ and $m \geq 1$. Then the integration operator $J$ on  $\mathcal{F}^p_{(\alpha, m)}$ satisfies
\begin{enumerate}
\item If $m>1$ and $1\leq p\leq \infty $, then $J$ is compact, quasi-nilpotent, hence power bounded, and uniformly mean ergodic  on $\mathcal{F}^p_{(\alpha, m)}$.
\item If $m=1$ and $p=\infty$, then $J$ is power bounded if and only if $\alpha \geq 1$.
\item If $m=1$ and $p=\infty$, then $J$ is uniformly mean ergodic  if and only if $\alpha > 1$.
\item If $m=1$ and $1\leq p < \infty $ and $J$ is power bounded, then $\alpha \geq 1$.
\item If $m=1$ and $1\leq p < \infty $ and $J$ is power bounded and uniformly mean ergodic, then $\alpha >  1$.
\end{enumerate}
\end{corollary}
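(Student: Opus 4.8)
The plan is to read off Corollary~\ref{cor2} from Theorem~\ref{thm2} by specializing to the integration operator, which is exactly $V_g$ with symbol $g(z)=z$. For this symbol one has $l=1$ and $a_l=1$, and the lower-order part $g_{l-1}$ is identically zero, so that $V_g=V_{g_l}=J$ throughout. The whole argument then amounts to substituting $l=1$ and $|a_l|=1$ into the five parts of Theorem~\ref{thm2}, distinguishing the two regimes $m>1$ and $m=1$.

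For the first regime, $m>1$, the constraint $l\le m$ is automatic and moreover $l=1<m$, so we are precisely in the hypotheses of Theorem~\ref{thm2}(i). That statement already yields that $J$ is compact, quasi-nilpotent, power bounded and uniformly mean ergodic on $\mathcal{F}^p_{(\alpha,m)}$ for every $1\le p\le\infty$, which is part (i).

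For the second regime, $m=1$, we have $l=m=1$, so $J=V_{g_l}$ with $|a_l|=1$. Parts (ii), (iv) and (v) are then immediate transcriptions: Theorem~\ref{thm2}(ii) gives, for $p=\infty$, that $J$ is power bounded if and only if $|a_l|\le\alpha$, i.e. $\alpha\ge1$; Theorem~\ref{thm2}(iv) gives, for $1\le p<\infty$, that power boundedness forces $\alpha\ge1$; and Theorem~\ref{thm2}(v) gives, for $1\le p<\infty$, that power boundedness together with uniform mean ergodicity forces $1<\alpha$, i.e. $\alpha>1$.

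The one point needing extra care is part (iii), the equivalence that $J$ is uniformly mean ergodic if and only if $\alpha>1$, in the case $m=1$, $p=\infty$. The implication $\alpha>1\Rightarrow$ uniformly mean ergodic, together with the failure of ergodicity at $\alpha=1$, is covered by Theorem~\ref{thm2}(iii) with $|a_l|=1$, since that theorem's hypothesis $|a_l|\le\alpha$ reads $\alpha\ge1$ and its conclusion is uniform mean ergodicity if and only if $|a_l|<\alpha$, i.e. $\alpha>1$. The gap is the range $0<\alpha<1$, where Theorem~\ref{thm2}(iii) does not apply because $|a_l|=1\le\alpha$ fails; here $J$ is still bounded (as $m=1$) but, by the spectral description $\sigma(J)=\{\lambda:|\lambda|\le|a_l|/\alpha\}$ recalled in the proof of Theorem~\ref{thm2}, its spectral radius equals $1/\alpha>1$. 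Hence $\|J^n\|^{1/n}\ge 1/\alpha>1$, so $\|J^n\|/n\to\infty$, which contradicts the necessary condition $\|T^n\|/n\to 0$ for a uniformly mean ergodic operator; thus $J$ is not uniformly mean ergodic when $\alpha<1$, and the equivalence in (iii) is complete. I expect this last spectral-radius step to be the only genuine obstacle, since everything else is a direct substitution of $l=1$ and $|a_l|=1$ into Theorem~\ref{thm2}.
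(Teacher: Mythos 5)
Your proposal is correct and follows exactly the route the paper takes: the paper's entire proof of Corollary~\ref{cor2} is the single line ``This follows immediately from Theorem~\ref{thm2},'' i.e.\ the specialization $g(z)=z$, $l=1$, $a_l=1$ that you carry out. Where you go beyond the paper is part (iii): you correctly notice that Theorem~\ref{thm2}(iii) carries the standing hypothesis $|a_l|\leq \alpha$ (here $\alpha\geq 1$), so the unconditional equivalence asserted in Corollary~\ref{cor2}(iii) is not literally covered by the theorem when $0<\alpha<1$. Your spectral-radius argument closes this case soundly: by the spectrum description $\sigma(J)=\{\lambda:\ |\lambda|\leq 1/\alpha\}$ cited in the proof of Theorem~\ref{thm2} (valid whenever $J$ is bounded, hence for all $\alpha>0$ when $m=1$, $p=\infty$), Gelfand's formula gives $\|J^n\|^{1/n}\geq 1/\alpha>1$ for every $n$, so $\|J^n\|/n\to\infty$, contradicting the necessary condition $\|T^n\|/n\to 0$ for uniform mean ergodicity (which follows from writing $T^n/n$ as a difference of consecutive Ces\`aro means). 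So your extra step is not a deviation but a genuine, correct repair of a case the paper's one-line proof silently skips.
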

\begin{proof}
This follows immediately from Theorem \ref{thm2}.
\end{proof}

\section{Dynamics of the Hardy operator on  $\mathcal{F}^p_{(\alpha, m)}$}

 In this section we  study  the dynamics of the   classical Hardy operator
$Hf(z)=\frac{1}{z}\int_0^z f(w)dw= \frac{1}{z} Jf(z)$ on the spaces $\mathcal{F}^p_{(\alpha, m)}$.
Dynamical properties of this operator has been investigated in related contexts in \cite{Maria, Maria2}.

    \begin{theorem}
\label{thm4}
Let $1\leq p< \infty $. Then the Hardy operator $H$ is both power bounded and uniformly mean ergodic on $\mathcal{F}^p_{(\alpha, m)}$.
Furthermore, $\| H\|= 1$.
\end{theorem}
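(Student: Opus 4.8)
The plan is to exploit the radial averaging structure of $H$. Parametrising the defining integral by $w=sz$, $s\in[0,1]$, gives the representation
\[
Hf(z)=\int_0^1 f(sz)\,ds=\int_0^1 D_sf\,ds,
\]
where $D_s$ denotes the dilation $D_sf(z)=f(sz)$. The first key step is to show that each $D_s$ is a contraction on $\mathcal{F}^p_{(\alpha,m)}$ for $s\in[0,1]$. Since $D_sf(re^{it})=f(sre^{it})$ one has $M_p(D_sf,r)=M_p(f,sr)$, and because the integral means $r\mapsto M_p(f,r)$ of an entire function are nondecreasing (a classical consequence of the subharmonicity of $|f|^p$), we get $M_p(f,sr)\le M_p(f,r)$ for $s\le 1$. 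Feeding this into the integral-mean form of the norm yields $\|D_sf\|_{(p,\alpha,m)}\le\|f\|_{(p,\alpha,m)}$. Minkowski's integral inequality then gives
\[
\|Hf\|_{(p,\alpha,m)}\le\int_0^1\|D_sf\|_{(p,\alpha,m)}\,ds\le\|f\|_{(p,\alpha,m)},
\]
so $\|H\|\le1$; combined with $H\textbf{1}=\textbf{1}$ this forces $\|H\|=1$ and, a fortiori, $\|H^n\|\le\|H\|^n\le1$, i.e.\ $H$ is power bounded.

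For uniform mean ergodicity I would first compute the iterates explicitly. Testing on monomials, $H(z^k)=(k+1)^{-1}z^k$, and matching moments shows
\[
H^jf(z)=\int_0^1 f(sz)\,\frac{(-\log s)^{j-1}}{(j-1)!}\,ds .
\]
Averaging, the Cesàro means take the form $A_nf=\frac1n\sum_{j=1}^nH^jf=\int_0^1 D_sf\,\Phi_n(s)\,ds$ with the nonnegative weight $\Phi_n(s)=\frac1n\sum_{j=1}^n\frac{(-\log s)^{j-1}}{(j-1)!}$, which satisfies $\int_0^1\Phi_n(s)\,ds=1$. The natural candidate limit is the rank-one projection $Pf=f(0)\textbf{1}$ onto the constants, which is bounded because point evaluation at $0$ is continuous on $\mathcal{F}^p_{(\alpha,m)}$.

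The crux is then a quantitative refinement of the contraction estimate: if $g(0)=0$, write $g(z)=zh(z)$, so that $M_p(g,sr)=sr\,M_p(h,sr)\le s\cdot r\,M_p(h,r)=sM_p(g,r)$, giving $\|D_sg\|\le s\|g\|$. Applying this to $g=(I-P)f$ and using $\int_0^1\Phi_n=1$ to write $(A_n-P)f=\int_0^1 (D_s-P)f\,\Phi_n(s)\,ds=\int_0^1 D_sg\,\Phi_n(s)\,ds$, one obtains
\[
\|A_n-P\|\le\int_0^1\|D_s(I-P)\|\,\Phi_n(s)\,ds\le \|I-P\|\int_0^1 s\,\Phi_n(s)\,ds .
\]
A direct moment computation gives $\int_0^1 s\,\Phi_n(s)\,ds=\frac1n\sum_{j=1}^n 2^{-j}\le\frac1n$, whence $\|A_n-P\|=O(1/n)\to0$ and $H$ is uniformly mean ergodic.

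The main obstacle I anticipate is pinning down the exact value $\|H\|=1$ rather than a mere power bound: the crude substitution $w=sz$ alone only yields the lossy estimate $\|D_s\|\le s^{-2/p}$, which is useless for $p\le2$, so the argument genuinely relies on the monotonicity of the integral means to see that $D_s$ is an honest contraction. The second delicate point is producing a rate of convergence for the Cesàro means in operator norm; here the improved bound $\|D_s(I-P)\|\le s$ together with the concentration of the weight $\Phi_n$ near $s=0$ is exactly what converts strong convergence into uniform convergence.
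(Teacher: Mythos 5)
Your proof is correct, and it takes a genuinely different route from the paper's. The paper works coefficient-wise: writing $f(z)=\sum_k a_kz^k$, it computes $H^nf(z)=\sum_k a_kz^k/(k+1)^n$, bounds each term by the Cauchy-type estimate $|a_k|\,\|z^k\|_{(p,\alpha,m)}\le \|f\|_{(p,\alpha,m)}$, and sums, getting $\|H^nf\|_{(p,\alpha,m)}\le \|f\|_{(p,\alpha,m)}\sum_{k\ge 0}(k+1)^{-n}$; power boundedness and uniform mean ergodicity are then read off from these scalar bounds on $\|H^n\|$. You instead use the dilation representation $H^jf=\int_0^1 D_sf\,\frac{(-\log s)^{j-1}}{(j-1)!}\,ds$, the contraction property $\|D_s\|\le 1$ (monotone integral means plus Minkowski), and the refined bound $\|D_sg\|\le s\|g\|$ on the hyperplane $\{g(0)=0\}$. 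What your route buys is substantial: it identifies the correct ergodic limit, namely the rank-one projection $Pf=f(0)\mathbf{1}$, with the explicit rate $\big\|\frac1n\sum_{j=1}^nH^j-P\big\|=O(1/n)$. Indeed, the paper's write-up stumbles exactly here: it asserts $\sum_{k\ge 0}(k+1)^{-n}\simeq \frac{1}{n-1}$, i.e.\ $\|H^n\|\to 0$, which is impossible since $H^n\mathbf{1}=\mathbf{1}$ forces $\|H^n\|\ge 1$; that series equals $\zeta(n)\to 1$, and only the tail $\sum_{k\ge 1}(k+1)^{-n}\le \frac{1}{n-1}$ is small. The paper's argument becomes correct precisely after splitting off the $k=0$ term, i.e.\ subtracting the projection onto the constants --- which is exactly the structure your proof makes explicit, so your version is not only different but more careful at the one delicate point. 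One small step to tighten: you verify the kernel formula for $H^j$ only on monomials; since both sides are bounded operators on $\mathcal{F}^p_{(\alpha,m)}$ (by your contraction estimates, $\int_0^1\|D_sf\|\frac{(-\log s)^{j-1}}{(j-1)!}ds\le \|f\|$) and the polynomials are dense in $\mathcal{F}^p_{(\alpha,m)}$, the identity extends to all $f$, but this should be said (alternatively, induct on $j$ using Fubini, all kernels being positive).
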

\emph{Proof}. Let us first show that $H$ is bounded and $\| H\|= 1$. Proceeding as in \cite{Maria}, we get
\begin{align*}
M_p^p(Hf,r) \leq M_p^p(f,r).
\end{align*}
Multiplying both sides by $2\pi r e^{-p\alpha r^m}$ and integrating over $r$ yields
\begin{align*}
\|Hf\|_{(p,\alpha, m)}^p \leq \|f\|_{(p,\alpha, m)}^p, \quad \quad \quad\quad \quad \quad
\end{align*} which implies that $H$ is bounded and $\|H\| \leq 1.$

On the other hand, if $H$ is bounded, then
\begin{align*}
\|H\textbf{1}\|_{(p,\alpha, m)}^p= \int_{\CC} |H\textbf{1}(z)|^p e^{-p\alpha |z|^m} dA(z)= \|\textbf{1}\|_{(p,\alpha, m)}^p
\end{align*} and hence $\|H\|=1$.

Next fix  $n>1$. Let $f(z)= \sum_{k=0}^\infty a_k z^k$ be the   Taylor series expansion of  $f\in \mathcal{F}^p_{(\alpha, m)}$. A simple  integral computation
 shows that
 \begin{align}
 \label{simple}
 H^n f(z)=\sum_{k=0}^\infty \frac{a_kz^k}{(k+1)^n}.
 \end{align}
 On the other hand, for each $r>0$, applying Cauchy inequalities
 \begin{align*}
 |a_k|r^k = \frac{r^k}{2\pi}\bigg|\int_{|\zeta|= r} \frac{|f(\zeta)|}{\zeta^{k+1}} d\zeta\bigg| \leq \frac{1}{2\pi}\int_{0}^{2\pi} |f(re^{i\theta)}|d\theta \leq M_p(f,r),
 \end{align*}
from which   we get   $ |a_k| \|z^k\|_{(p,\alpha, m)} \leq \|f\|_{(p,\alpha, m)}$  holds  for $k\geq 0$. This and \eqref{simple} imply
 \begin{align*}
 \|H^n f\|_{(p,\alpha, m)}\leq \sum_{k=0}^\infty \frac{|a_k|\|z^k\|_{(p,\alpha, m)}}{(k+1)^n}  \leq  \| f\|_{(p,\alpha, m)} \sum_{k=0}^\infty \frac{1}{(k+1)^n}\simeq  \frac{\| f\|_{(p,\alpha, m)}}{n-1}
 \end{align*} for all $n>1$.
 Therefore, $H$ is power bounded.  Observe also that,
\begin{align*}
\Big\| \frac{1}{n}\sum_{j=1}^n H^j\Big\| \leq \frac{1}{n}\sum_{j=1}^n\|H^j\|
= \frac{1}{n}\bigg(\|H\|+\sum_{j=2}^n\|H^j\|\bigg)= \frac{1}{n}+ \frac{1}{n} \sum_{j=2}^n\|H^j\|\quad \quad \\
\lesssim  \frac{1}{n}+  \frac{1}{n}\sum_{j=2}^n \frac{1}{j-1} \simeq  \frac{1}{n}+  \frac{\log |n-1|}{n}  \to 0\quad \quad
\end{align*}as $n \to \infty$. Therefore, $H$ is also uniformly mean ergodic, which completes the proof.

We now mention consequences of  Theorem~\ref{thm4}. Since  all orbits $\{T^nf; n=0,1,2,...\}$ of any  power bounded operator $T$ are bounded, it  cannot be hypercyclic.  This  conclusion fails to hold for the supercyclicity property in general. But if the operator satisfies in addition  for example $Tf(\zeta)=f(\zeta)$ for all $f$ in the space and at least one point $\zeta\in \CC$, then  $T$ is not supercyclic either. We will prove this for the operator $H$, and  the same proof works in general for any other power bounded operator  $T$.

  \begin{corollary}
 \label{cor3}
 Let $1\leq p< \infty $. Then the Hardy operator $H$ is not supercyclic on $\mathcal{F}^p_{(\alpha, m)}$.
  \end{corollary}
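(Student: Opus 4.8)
The plan is to exploit a fixed evaluation point of $H$ at the origin, exactly as the remark preceding the corollary suggests. First I would record the structural identity $H^n f(0)=f(0)$ for every $f\in\mathcal{F}^p_{(\alpha,m)}$ and every $n\in\NN_0$; this is immediate from the series representation \eqref{simple}, since evaluating $H^nf(z)=\sum_{k\ge0}a_kz^k/(k+1)^n$ at $z=0$ returns the constant term $a_0=f(0)$. Next I would note that point evaluation at the origin is a continuous linear functional on $\mathcal{F}^p_{(\alpha,m)}$: the Cauchy-type estimate $|a_k|\,\|z^k\|_{(p,\alpha,m)}\le\|f\|_{(p,\alpha,m)}$ established in the proof of Theorem~\ref{thm4} gives, for $k=0$, the bound $|f(0)|\le\|f\|_{(p,\alpha,m)}/\|\textbf{1}\|_{(p,\alpha,m)}$, so the functional $\delta_0(f):=f(0)$ satisfies $\|\delta_0\|<\infty$. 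I would then argue by contradiction, assuming $H$ admits a supercyclic vector $f$, and invoking Theorem~\ref{thm4} to fix a constant $M:=\sup_n\|H^n\|<\infty$.

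The core of the argument splits according to the value $f(0)$. If $f(0)=0$, then $\delta_0(\lambda H^nf)=\lambda f(0)=0$ for all $\lambda\in\CC$ and $n\in\NN_0$, so the entire projective orbit lies in the proper closed subspace $\ker\delta_0$ and cannot be dense. If instead $f(0)\neq0$, I would sandwich the orbit norms: power boundedness gives $\|H^nf\|\le M\|f\|$ from above, while continuity of $\delta_0$ together with $H^nf(0)=f(0)$ gives $\|H^nf\|\ge|f(0)|/\|\delta_0\|>0$ from below. Consequently, for every nonzero element $h=\lambda H^nf$ of the projective orbit,
\begin{align*}
\frac{|\delta_0(h)|}{\|h\|}=\frac{|f(0)|}{\|H^nf\|}\ge\frac{|f(0)|}{M\|f\|}=:c>0.
\end{align*}
Thus the whole projective orbit is contained in the set $S=\{h:|\delta_0(h)|\ge c\|h\|\}$, which is closed (both $\delta_0$ and the norm are continuous) and proper, since the monomial $h(z)=z$ lies in $\mathcal{F}^p_{(\alpha,m)}$ with $\delta_0(h)=0$ but $\|h\|>0$. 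Hence the closure of the orbit is contained in $S\neq\mathcal{F}^p_{(\alpha,m)}$, again contradicting density.

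The main obstacle is the case $f(0)\neq0$: here evaluation at the origin alone does not immediately kill the projective orbit, and the whole point of power boundedness enters precisely to produce the two-sided control on $\|H^nf\|$ and hence the uniform lower bound $c>0$ on the ratio $|\delta_0(h)|/\|h\|$. Once this uniform separation is in hand, the closedness and properness of $S$ finish the proof. I expect the routine verifications — the series identity, the continuity of $\delta_0$, and the closedness of $S$ — to be straightforward, so no genuinely hard estimate remains; indeed the same reasoning applies verbatim to any power bounded operator $T$ satisfying $Tf(\zeta)=f(\zeta)$ for all $f$ and a fixed $\zeta\in\CC$, as anticipated in the discussion above.
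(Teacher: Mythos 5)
Your proof is correct, and it rests on the same two pillars as the paper's: the invariance $H^nf(0)=f(0)$ and the power boundedness from Theorem~\ref{thm4}. The execution, however, is organized differently. The paper argues sequentially with two concrete targets: approximating the constant function $\textbf{1}$ by a projective orbit sequence forces $f(0)\neq 0$ (so no case split is needed), and approximating $h(z)=z$ produces scalars $\theta_j\to 0$; power boundedness then gives $\theta_j H^{n_j}f\to 0$ in norm, contradicting $\theta_j H^{n_j}f(1)\to h(1)=1$. Note that this implicitly uses continuity of evaluation at \emph{two} points, $z=0$ and $z=1$. Your argument instead packages the obstruction as an angle-type criterion: in the case $f(0)=0$ the projective orbit sits in the proper closed subspace $\ker\delta_0$, and in the case $f(0)\neq 0$ the two-sided bound $|f(0)|/\|\delta_0\|\leq\|H^nf\|\leq M\|f\|$ traps the orbit in the proper closed set $\{h:|\delta_0(h)|\geq c\|h\|\}$. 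This needs only the single functional $\delta_0$, avoids any choice of target functions, and makes completely transparent the generalization the paper only asserts in the remark preceding the corollary — that the same conclusion holds for any power bounded $T$ with $Tf(\zeta)=f(\zeta)$ for all $f$ and some fixed $\zeta$. What the paper's version buys in exchange is concreteness: it exhibits explicitly which approximations fail, without introducing the auxiliary set $S$.
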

\begin{proof}
For each $f$ and each $n$ we have
  \begin{align}
  \label{Hopitals}
  Hf(0)= f(0)= H^nf(0).
  \end{align}
Proceeding by contradiction, suppose that $f$ is a supercyclic vector for $H$. Then for the constant function $\textbf{1}$, there exits  a sequence
  $(\lambda_k H^{n_k}f)$ in the projective orbit such that  $\lambda_k H^{n_k}f \to \textbf{1} $ as $k\to \infty$.  Consequently,  by \eqref{Hopitals},
  \begin{align*}
  \lim_{k\to \infty} \lambda_k H^{n_k}f(0)= \lim_{k\to \infty} \lambda_k f(0)= 1,
  \end{align*} hence $f(0)\neq 0$ and $(\lambda_k)$  is not a null sequence either. \\
  Similarly, for  the function $h(z)=z$, there exists again  a sequence   $(\theta_j H^{n_j}f)$ in the projective orbit such that  $\theta_j H^{n_j}f \to h $ as $j\to \infty$, and
  \begin{align}
  \label{forone}
  \lim_{j\to \infty}\theta_j H^{n_j}f(0)= \lim_{j\to \infty} \theta_j f(0)= h(0)=0\nonumber\\
   \lim_{j\to \infty}\theta_j H^{n_j}f(1)= \lim_{j\to \infty} \theta_j f(1)= h(1)=1,
    \end{align} from which, since $f(0)\neq 0$, we deduce  $\theta_j \to 0$ as $j\to \infty$.  This and power boundedness in  Theorem~\ref{thm4} yield
    \begin{align*}
    \lim_{j\to \infty}\|\theta_j H^{n_j}f\|_{(p,\alpha, m)}\lesssim \| f\|_{(p,\alpha, m)} \lim_{j\to \infty}|\theta_j|= 0.
    \end{align*} Therefore, $\theta_j H^{n_j}f \to \textbf{0}$  which implies $\theta_j H^{n_j}f (1) \to 0$ as $j \to \infty$. This contradicts \eqref{forone}.
    \end{proof}

\section{ The Ritt's resolvent condition }
 A classical operator theoretic problem, for any given  bounded operator $T$ on a complex  Banach space $X$, is  to identify the relation between the size of the resolvent $(T-\lambda I)^{-1}$ when $\lambda$ is near to the spectrum of $T$ and the asymptotic behaviour of the orbits $\{T^nx: x\in X\}$. In this perspective, recall that the Ritt's condition \cite{Ritt}  for $T$ states that there exists a  positive constant $C$ such that
     \begin{align*}
     \|(T-\lambda I)^{-1}\| \leq  \frac{C}{|\lambda-1|}
     \end{align*} for each $\lambda \in \CC$  and  $|\lambda|>1$.  As an immediate consequence of   Theorem~\ref{thm4}, it turns out that  the  Hardy operator $H$ on  generalized Fock spaces  belongs to the class of operators satisfying such condition.
 \begin{proposition}
Let $1\leq p< \infty $. Then
\begin{enumerate}
\item the Hardy operator $H$   satisfies the Ritt's resolvent condition on $\mathcal{F}^p_{(\alpha, m)}$.
\item the differentiation operator   $D$   satisfies the Ritt's resolvent condition on $\mathcal{F}^p_{(\alpha, m)}$ if and only if it is power bounded and uniformly mean ergodic.
\end{enumerate}
\end{proposition}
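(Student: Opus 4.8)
The plan is to handle the two operators by quite different routes, since $H$ is diagonal on the monomial basis whereas $D$ is not. For part (i), I would first record that $Hz^k=\frac{1}{k+1}z^k$, so $H$ is the diagonal operator with eigenvalues $\frac{1}{k+1}$, $k\ge 0$; in particular $1$ is a simple isolated eigenvalue whose eigenspace is the constants. Let $P_0f=f(0)\,\textbf{1}$ be the projection onto the constants, which is bounded since $|f(0)|\,\|\textbf{1}\|_{(p,\alpha,m)}\le\|f\|_{(p,\alpha,m)}$ (the $k=0$ case of the Cauchy estimate used in Theorem~\ref{thm4}), and split $\mathcal F^p_{(\alpha,m)}=P_0\mathcal F^p_{(\alpha,m)}\oplus X_0$ with $X_0=\{f:f(0)=0\}$. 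Both summands are $H$-invariant, $H$ is the identity on the first, and on $X_0$ the bound $\|(H|_{X_0})^n\|\le\sum_{k\ge1}(k+1)^{-n}=\zeta(n)-1$ (again from $|a_k|\,\|z^k\|\le\|f\|$) gives spectral radius $r(H|_{X_0})\le\tfrac12<1$. Then for $|\lambda|>1$ I would write
\begin{align*}
(\lambda I-H)^{-1}=\frac{1}{\lambda-1}P_0+(\lambda I-H|_{X_0})^{-1}(I-P_0).
\end{align*}
The first term contributes the Ritt factor $\|P_0\|/|\lambda-1|$; the second is controlled because $1$ sits at positive distance from $\sigma(H|_{X_0})$, so $\lambda\mapsto|\lambda-1|\,\|(\lambda I-H|_{X_0})^{-1}\|$ is continuous on $\{|\lambda|\ge1\}$ and tends to $1$ at infinity, hence is bounded, giving $\|(\lambda I-H|_{X_0})^{-1}\|\le C/|\lambda-1|$. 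Adding the pieces yields $\|(\lambda I-H)^{-1}\|\le C'/|\lambda-1|$. (A quicker alternative uses the known characterization that a power bounded operator is a Ritt operator precisely when $\sup_n n\|T^n-T^{n+1}\|<\infty$: from \eqref{simple} one has $(H^n-H^{n+1})f=\sum_{k\ge1}\frac{k}{(k+1)^{n+1}}a_kz^k$, so $\|H^n-H^{n+1}\|\le\zeta(n)-1\lesssim 2^{-n}$ for $n\ge2$ and $n\|H^n-H^{n+1}\|$ is bounded, while power boundedness is Theorem~\ref{thm4}.)

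For part (ii), since $D$ is bounded we have $m\le1$, and I would let Theorem~\ref{thm1} organize the argument. For the implication (power bounded and uniformly mean ergodic) $\Rightarrow$ Ritt, Theorem~\ref{thm1} restricts us to $m<1$ or $m=1,\alpha<1$. In the first case $D$ is compact with $\sigma(D)=\{0\}$; in the second, \eqref{estdiff} gives $\|D^k\|\lesssim k^{p/2}e^{-(1-\alpha)k}$, whence $r(D)\le e^{\alpha-1}<1$. So in both cases $\sigma(D)$ lies in a disc of radius $<1$ and $1\notin\sigma(D)$, and exactly as in part (i) the map $\lambda\mapsto|\lambda-1|\,\|(\lambda I-D)^{-1}\|$ is continuous on $\{|\lambda|\ge1\}$ with finite limit at infinity, hence bounded, which is the Ritt condition.

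For the converse I would argue by contraposition, using the two standard spectral consequences of the Ritt bound: if $T$ satisfies Ritt then $\sigma(T)\subseteq\overline{\D}$ and $\sigma(T)\cap\partial\D\subseteq\{1\}$, because a point $\mu\in\sigma(T)$ with $|\mu|\ge1$ and $\mu\ne1$ would force $\|(\lambda I-T)^{-1}\|\ge 1/\mathrm{dist}(\lambda,\sigma(T))\to\infty$ as $\lambda\to\mu$ through $\{|\lambda|>1\}$, whereas $C/|\lambda-1|$ stays finite. If $D$ is not power bounded and uniformly mean ergodic then, by Theorem~\ref{thm1} together with $m\le1$, we are in the case $m=1$, $\alpha\ge1$; but then the eigenfunctions $e_\beta$, $|\beta|<\alpha$, give $\overline{D(0,\alpha)}\subseteq\sigma(D)$, so $\sigma(D)\supseteq\overline{\D}$ contains $-1\in\partial\D\setminus\{1\}$ (and, when $\alpha>1$, points of modulus exceeding $1$). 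Either way the spectral constraints above are violated, so $D$ cannot satisfy the Ritt condition.

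The main obstacle is the uniform resolvent estimate $C/|\lambda-1|$ itself. For $H$ one cannot bound $(\lambda I-H)^{-1}$ termwise on the monomials: the naive inequality $\|(\lambda I-H)^{-1}f\|\le\|f\|\sum_k|\tfrac{1}{k+1}-\lambda|^{-1}$ diverges, since $\tfrac{1}{k+1}-\lambda\to-\lambda$ and the general term tends to $1/|\lambda|\ne0$. Isolating the eigenvalue $1$ and observing that the complementary part has spectral radius $\le\tfrac12$ is precisely what makes the estimate converge, and this is where Theorem~\ref{thm4} (through $|a_k|\,\|z^k\|\le\|f\|$) enters. For part (ii) the delicate step is purely spectral, namely converting the presence of $1$ as an accumulation or interior point of $\sigma(D)$ in the non-ergodic cases into a failure of the resolvent growth bound; this is dispatched cleanly by the two spectral consequences of the Ritt condition combined with the spectral description $\overline{D(0,\alpha)}\subseteq\sigma(D)$ already obtained in the proof of Theorem~\ref{thm1}.
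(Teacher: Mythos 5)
Your proof is correct, but it follows a genuinely different route from the paper's for the ``sufficiency'' halves. The paper runs everything through the Nagy--Zemanek characterization (an operator satisfies Ritt's condition if and only if it is power bounded and $\sup_{n\geq 1} n\|T^{n+1}-T^n\|<\infty$, i.e.\ condition \eqref{Ritt}): for $H$ it estimates $\|H^{n+1}f-H^nf\|$ coefficientwise from \eqref{simple} (this is exactly your ``quicker alternative''), and for the forward direction of (ii) it bounds $n\|D^{n+1}-D^n\|$ using \eqref{forresolvent} and \eqref{estdiff}. You instead verify the resolvent bound directly from spectral localization: for $H$ you split off the eigenvalue $1$ via the bounded projection $P_0f=f(0)\mathbf{1}$, show $r(H|_{X_0})\leq\frac12$ from the same Cauchy-estimate inequality $|a_k|\,\|z^k\|_{(p,\alpha,m)}\leq\|f\|_{(p,\alpha,m)}$, and then bound $|\lambda-1|\,\|(\lambda I-H)^{-1}\|$ by continuity plus the limit at infinity; for $D$ in the cases $m<1$ or $m=1,\ \alpha<1$ you note $r(D)<1$, so $1\notin\sigma(D)$ and the same continuity argument applies. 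What your route buys is self-containedness and a sharper spectral picture ($\sigma(H)\subseteq\{1\}\cup\overline{D(0,\tfrac12)}$): you never need the Nagy--Zemanek theorem for sufficiency, and in the necessity direction of (ii) you prove the fact that a Ritt operator's spectrum meets the unit circle only at $1$ by the elementary distance-to-spectrum estimate, where the paper cites the results of Lyubich and Nevanlinna; the core of that necessity argument (the inclusion $\overline{D(0,\alpha)}\subseteq\sigma(D)$ via the exponentials $e_\beta$, combined with Theorem~\ref{thm1}) is the same in both. What the paper's route buys is brevity and uniformity: one characterization dispatches both operators using iterate-norm estimates already in hand, with no invariant-subspace decomposition needed. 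One small point worth polishing in your write-up: your bound $\|(H|_{X_0})^n\|\leq\zeta(n)-1$ is only meaningful for $n\geq 2$, but since the spectral radius is a limit this does not affect the conclusion $r(H|_{X_0})\leq\frac12$.
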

\begin{proof}
(i) Nagy and Zemanek \cite{Nagy} proved  that a bounded  operator $T$ on a complex Banach space satisfies the Ritt's resolvent condition if and only if it is
power bounded and
\begin{align}
\label{Ritt}
\sup_{n\geq 1}n\|T^{n+1}-T^n\| <\infty.
\end{align} Thus, by  Theorem~\ref{thm4}, it is enough  to show that the operator $H$ satisfies condition \eqref{Ritt}.  To this goal, applying \eqref{simple}
\begin{align*}
 \|H^{n+1} f-H^n f\|_{(p,\alpha, m)}\leq \sum_{k=0}^\infty \frac{|a_k|\|z^k\|_{(p,\alpha, m)}}{(k+1)^n} \Big|\frac{1}{k+1}-1\Big|\\
    \leq  \| f\|_{(p,\alpha, m)} \sum_{k=0}^\infty \frac{k}{(k+1)^{n+1}}\simeq  \frac{\| f\|_{(p,\alpha, m)}}{n},
 \end{align*} and  the conclusion  easily follows.

(ii)  Assume first  that $D$ is power bounded and uniformly mean ergodic. Then by Theorem~\ref{thm1},  either $m<1$ or $m=1$ and $\alpha <1$. For $m<1$, arguing as in   \eqref{forresolvent}  there exist  $N\in \NN$ and $0 < \delta < 1$ such that
\begin{align}
\label{r1}
n\|D^{n+1}-D^n\|\leq n\|D^{n+1}\|+ n\|D^n\| \leq 2n\delta^{n} \ \ \text{for all} \ n \geq N.
\end{align}  Similarly, if  $m=1$ and $\alpha <1$, then \eqref{estdiff} implies
 \begin{align}
 \label{r2}
n\|D^{n+1}-D^n\|\leq n\|D^{n+1}\|+ n\|D^n\| \leq n\frac{(n+1)^{\frac{p}{2}}}{e^{(1-\alpha )(n+1)}}+n\frac{n^{\frac{p}{2}}}{e^{(1-\alpha )n}}.
\end{align}  Now we take the supremum with respect to $n\in \NN$ both in \eqref{r1} and \eqref{r2} to see that condition \eqref{Ritt} is  satisfied.

For  the other  implication, by  a result of Nagy and Zemanek \cite{Nagy}, it is enough to show that $D$ is uniformly mean ergodic.   We arrive at this conclusion if we show
that the Ritt's condition fails  when $m=1$ and $\alpha\geq  1$.  If $m=1$, then  using again the exponential functions, $e_{\beta}(z)=e^{\beta z}, \ |\beta| < \alpha$, we get $\overline{D(0,\alpha )} \subset \sigma (D)$.  Thus, then  the spectrum  $\sigma(D)$ contains the unit circle $\mathbb{T}$  whenever if  $\alpha\geq 1$. This is a contradiction since the spectrum of an operator which satisfies the Ritt's  resolvent condition contains only $1$ from the unit circle; see \cite{LY} and \cite[Theorem 4.5.4]{NE} for more details.
\end{proof}

\vspace{.2cm}

\subparagraph*{\textbf{Acknowledgement}}
The authors are very thankful to the referee for the careful reading of our paper and many suggestions which corrected and improved our manuscript.

Part of this  work  was done during the third-named author's stay at the Instituto Universitario de Matem\'atica Pura y Aplicada of the Universitat Polit\`ecnica de Val\`encia. He would like to thank Prof. Jos\'e Bonet, Prof. Alfred Peris and all other  members of the institute for their hospitality and kindness during his stay in Valencia, Spain.

\end{document}